\documentclass[11pt, a4paper]{article}
\usepackage{amsmath}
\usepackage{amsthm}
\usepackage{amssymb}
\usepackage{array}
\usepackage{multirow}
\usepackage{graphicx}
\usepackage{epsfig}
\usepackage{mathrsfs}
\usepackage{hhline}
\usepackage{longtable}
\usepackage{array}
\usepackage{cite}
\usepackage{enumerate}
\usepackage{xcolor}
\usepackage{float}

\usepackage{graphicx}
\graphicspath{%
    {converted_graphics/}
    {/}
}

\newcolumntype{"}{@{\hskip\tabcolsep\vrule width 1pt\hskip\tabcolsep}}

\usepackage[mathlines]{lineno}
\modulolinenumbers[2]
\newcommand*\patchAmsMathEnvironmentForLineno[1]{%
\expandafter\let\csname old#1\expandafter\endcsname\csname #1\endcsname  \expandafter\let\csname oldend#1\expandafter\endcsname\csname end#1\endcsname  \renewenvironment{#1}%
{\linenomath\csname old#1\endcsname}%
{\csname oldend#1\endcsname\endlinenomath}}%
\newcommand*\patchBothAmsMathEnvironmentsForLineno[1]{%
\patchAmsMathEnvironmentForLineno{#1}%
\patchAmsMathEnvironmentForLineno{#1*}}%
\AtBeginDocument{%
\patchBothAmsMathEnvironmentsForLineno{equation}%
\patchBothAmsMathEnvironmentsForLineno{align}%
\patchBothAmsMathEnvironmentsForLineno{flalign}%
\patchBothAmsMathEnvironmentsForLineno{alignat}%
\patchBothAmsMathEnvironmentsForLineno{gather}%
\patchBothAmsMathEnvironmentsForLineno{multline}%
}

\def\Z{\mathbb{Z}}

\definecolor{vividviolet}{rgb}{0.62, 0.0, 1.0}
\def\nt{\noindent}
\def\ms{\medskip}
\def\rsq{\hspace*{\fill}$\blacksquare$\medskip}

\newtheoremstyle{de}
  {10pt}          
  {10pt}  
  {\rm}  
  {}
  {\bf}  
  {. }    
  { }    
  {}     
\theoremstyle{de}

\newtheorem{example}{Example}[section]

\newtheorem{problem}{Problem}[section]

\newtheoremstyle{theorem}
  {10pt}          
  {10pt}  
  {\it}  
  {}
  {\bf}  
  {. }    
  { }    
  {}     
\theoremstyle{theorem}

\numberwithin{equation}{section}
\def\Z{\mathbb{Z}}

\newtheorem{theorem}{Theorem}[section]
\newtheorem{lemma}[theorem]{Lemma}

\newtheorem{corollary}[theorem]{Corollary}
\newtheorem{conjecture}{Conjecture}[section]
\numberwithin{equation}{section}

\def\ms{\medskip}

\def\nt{\noindent}

\def\a{\alpha}
\def\b{\beta}
\def\n{\nu}

\def\m{\mu}

\def\k{\kappa}

\def\g{\gamma}

\def\sg{\sigma}
\def\r{\rho}
\def\to{\rightarrow}

\newcommand{\K}{K\lower0.2cm\hbox{4}\ }

\newcommand{\ben}{\begin{enumerate}}

\newcommand{\een}{\end{enumerate}}
\newcommand{\bit}{\begin{itemize}}
\newcommand{\eit}{\end{itemize}}
\newcommand{\bea}{\begin{eqnarray*}}
\newcommand{\eea}{\end{eqnarray*}}
\newcommand{\bear}{\begin{eqnarray}}
\newcommand{\eear}{\end{eqnarray}}

\begin{document}

\begin{center}
{\Large \bf Complete characterization of disconnected one-point unions of cycles with local antimagic chromatic number 2}

\bigskip
{\large  G.C. Lau{$^{a,}$}\footnote{Corresponding author}, W.C. Shiu{$^{b}$} }\\

\medskip

\emph{{$^a$}Independent Researcher,}\\
\emph{85000, Segamat, Johor, Malaysia.}\\
\emph{https://orcid.org/0000-0002-9777-6571}\\
\emph{geeclau@yahoo.com}\\

\medskip

\emph{{$^b$}Department of Mathematics,}\\
\emph{The Chinese University of Hong Kong,}\\
\emph{Shatin, Hong Kong.}\\
\emph{wcshiu@associate.hkbu.edu.hk}\\

\end{center}
\begin{abstract} For a graph $G$, characterizing the structural behavior of achieving the minimal local antimagic chromatic number $\chi_{la}(G) = 2$ remains a fundamental open problem in the study of vertex-distinguishing edge labelings. This paper establishes complete characterizations for a significant structural class of disconnected graphs, specifically addressing disconnected one-point unions of cycles with at most three components. By introducing a robust algebraic approach, we show that these specific graph architectures form distinct, manageable solution families under a local antimagic 2-coloring framework. Crucially, our structural framework provides the theoretical basis for extending these complete characterizations to an arbitrary number of components, demonstrating unique edge assignments up to isomorphism. These results offer key insights into the structural properties governing low-chromatic antimagic invariants and offer significant progress toward a unified classification of graphs with $\chi_{la}(G) = 2$. 

\medskip
\noindent Keywords: Local antimagic labeling; local antimagic chromatic number; one-point unions of cycles; disconnected graphs; graph coloring.
\medskip

\noindent MSC: 05C78, 05C69.
\end{abstract}

\tolerance=10000
\baselineskip12truept
\section{Introduction}

\ms\nt For a graph $G = (V, E)$ of order $p$ and size $q$, a bijective edge labeling $f : E(G)\to \{1, \ldots, q\}$ is a local antimagic labeling if $f$ induces a vertex labeling $f^+ : V(G) \to \Z^+$ such that $f^+(x) \ne f^+(y)$ for every edge $xy\in E(G)$, where $f^+(x)=\sum f(e)$ over all edges $e$ incident to $x$. Thus, $f^+$ is a proper vertex coloring of $G$. The smallest number of distinct induced vertex labels over all possible local antimagic labelings $f$ of $G$ is called the local antimagic chromatic number of $G$, denoted $\chi_{la}(G)$. Using probabilistic method, Haslegrave~\cite{Haslegrave} proved that every connected graph except $K_2$ admits a local antimagic labeling. Thus, local antimagic chromatic number is well-defined for all graphs without a $K_2$ component. For positive integers $a < b$, let $[a,b]=\{a, \ldots, b\}$. The sequence of $n$ copies of integer $a$ is denoted $a^{[n]}$, $n\ge 2$. For graphs $G$ and $H$, $G+H$ denotes the disjoint union of $G$ and $H$, and $mG$ denotes the disjoint union of $m\ge 2$ copies of $G$. 

\ms\nt Since the introduction of the concepts of local antimagic labeling (also see~\cite{Bensmail}) and local antimagic chromatic number by Arumugam et al.~\cite{Arumugam} in 2017, many papers have been published (see \cite{Baca+A+W, Chan+Lau+Shiu, Chan+Shiu+Lau, Lau+Shiu+ArsComb, Lau+Shiu+CN, Lau+Shiu+UM, Lau+Shiu+Pre+Nal, Lau+Shiu+Nal+Pre, XLZ-IJMSI} which contain numerous results including disconnected bipartite and tripartite graphs with local antimagic chromatic number 3).   In~\cite[Theorem 9]{LNS-GC}, Lau, Ng and Shiu proved that there is a graph $G$ of order $n$ with $\chi_{la}(G)=2$ if and only if $n\in\{6\}\cup[8,\infty)$.  In~\cite[Problem 4.1]{LSNZP}, the authors posed the following problem.

\begin{problem} Characterize graphs $G$ with $\chi_{la}(G)=2$.
\end{problem}

\nt In an attempt to answer this problem, Lau and Shiu~\cite{Lau+Shiu+chila2d=3} defined a class of bipartite graphs $\mathcal B_d$ such that  (i) every vertex of $G \in \mathcal B_d$ is of degree 2 except exactly $d$ vertices, say $v_1, v_2, \ldots, v_d$, $d\ge 1$, are of degree at least $3$, (ii) $\chi_{la}(G) = 2$, and (iii) every induced $(v_i,v_j)$-path $(1\le i,j\le d)$ (or cycle) of $G$ has even number of edges. Suppose $P$ is an induced $(v_1,v_2)$-path of length $2r$. Recall that, we use $\a_1$ and $\b_r$ to denote its end-edge labels incident to $v_1$ and $v_2$, respectively, where $\a_1<\b_r$. We shall use an ordered pair $(\a_1, \b_r)_{2r}$ (or simply $(\a_1, \b_r)$) to represent $P$.  When $v_1=v_2$,  $(\a_1, \b_r)$ denotes an induced cycle.

\ms\nt  Consequently, all the one-point unions of cycles $C_{n_i}, n_i\ge 3, 1\le i\le  m,  m\ge 2$ (in which all vertices have degree 2 except for one vertex of degree $2m$, denoted $C(n_1,n_2,\ldots,n_m)$), with local antimagic chromatic number 2  in~\cite[Theorem 2.4]{LSN-IJMSI}, form the only family of graphs in $\mathcal B_1$, while all the bridge graphs in~\cite[Theorem 2.1]{LSNZP}  are in  $\mathcal B_2$.  In~\cite{LSN-DMGT}, the authors define a family of bipartite graphs $B(n_1,n_2,\ldots,n_d)$, $d\ge 2$, $i\ge 1$ and $n_i\ge 1$, to be the union of $K_{2,n_i}$ with bipartition $(X_i, Y_i)$, where $X_i=\{x_{i-1},x_i\}$, $Y_i=\{y_{i,1},y_{i,2},\dots, y_{i,n_i}\}$ and $x_d=x_0$. Clearly,  $B(n_1,n_2)\cong K_{2,n_1+n_2}$ with local antimagic chromatic number 2 if and only if $n_1+n_2$ is even; and is 3 otherwise (see~\cite[Theorem 10]{LNS-GC}). Moreover, the graph $B(n^{[d]})$, $n\ge 2, d\ge 3$,~\cite[Theorem 3.23]{LSN-DMGT} is a graph in $\mathcal B_d$ with local antimagic chromatic number 2 for even $n$ or odd $n,d$. Note that for $d\ge 2$, $B(1^{[d]})\cong C_{2d}$ with $\chi_{la}(C_{2d}) = 3$. For $d\ge 1$, let $\widetilde{\mathcal B}_d=\mathcal B_d\setminus \{B(n^{[d]})\;|\; n\ge 1\}$ with $\widetilde{\mathcal B}_1=\mathcal B_1$ as $B(n^{[1]})$ is undefined. In~\cite{Lau+Shiu+chila2d=3}, we obtained necessary and sufficient conditions for a family of graphs to admit a local antimagic 2-coloring in $\widetilde{\mathcal B}_3$ (also see Theorem~\ref{thm-iff-d3} in Section~\ref{sec:3comp}). The following problem~\cite[Problem 5.2]{Lau+Shiu+chila2d=3} was also posed.

\begin{problem}\label{prob-Bd} For each $d\ge 2$, give a complete characterization of $G\in \widetilde{\mathcal B}_d$. \end{problem}


\nt By definition, every graph $G$ in $\widetilde{\mathcal B}_d$ has at most $d\ge 1$ components. Moreover, if $G$ has exactly $d$ components, then each component is a one-point union of cycles. Let $\mathcal A_d$ be the set of graphs in $\widetilde{\mathcal B}_d$ with $d\ge1$ components. Thus, $G\in \mathcal A_d$ implies that $G\cong H_1+\cdots + H_d$,  where $H_\k, 1\le \k\le d,$ is a one-point union of cycles. The case $d=1$ is completely solved in~\cite[Theorem 2.4]{LSN-IJMSI}. For each $\k$, let $X_\k$ be the set of induced cycles of $H_\k$, and $S(X_\k)=\sum\limits_{(\a_1, \b_r)\in X_\k} \hspace*{-4mm} (\a_1+\b_r)$, $1\le \k\le d$.   So,  $G\in \mathcal A_d$ with $s$ induced cycles if and only if $\{X_1, \ldots, X_d\}$ is a partition of the set of all induced cycles of $G$ such that $S(X_\k)=y$ for each $\k$, and $|X_1|+\cdots +|X_d|=s$.  We shall keep these notations throughout the paper.

\ms\nt While Problem 1.2 has been settled for the single-component case ($d = 1$) in \cite{LSN-IJMSI}, establishing a scalable framework for multi-component architectures remains a key bottleneck in completely resolving the overarching classification challenge of Problem 1.1. 

\ms\nt To address this gap, this paper does not merely provide localized solutions; rather, it introduces a robust, unified algebraic framework to systematically characterize low-chromatic antimagic invariants across disconnected systems. We implement this strategy by completely characterizing all graph configurations in $\mathcal{A}_d$ for $d = 2, 3$ up to isomorphism. 

\ms\nt Crucially, the algebraic partitioning methodology developed herein is designed to be structurally scalable. By transforming the combinatorial assignment problem into bounded algebraic condition sets, this approach establishes the necessary theoretical foundation to systematically extend these complete characterizations to systems with an arbitrary number of components ($d \ge 4$) without a loss of structural generality.

\ms\nt In~\cite[Lemma 2.1]{Lau+Shiu+chila2d=3}, the authors obtained the following useful lemma. 

\begin{lemma}\label{lem-induced} Suppose $G\in\widetilde{\mathcal B}_d$, $d\ge 1$, has size $m$. If $f$ is a local antimagic $2$-coloring of $G$ with induced vertex labels $x,y$, then
\begin{enumerate}[(a)]
\item $f^+(v_i)= y > x$ for $1\le i\le d$,
\item $m\ge 3d+2$;
\item $x=m+1$ is odd and $y=m(m+1)/(m-2s+2d) \ge (2s^2+s)/d$;
\item all integers in $[1, y-x]$ are end-edge labels and $y/2$ is also an end-edge label if $y$ is even;
\item there exist $b\ge a\ge 2$ such that $a\in \{2i\;|\; 1\le i\le 2d\}$ with $a = y+4d-4s-1-t$, $b = y+4d-4s-1+t$, $a+b=2y+8d-8s-2$ and $ab = 8(2s^2-4sd+2d^2+s-d)$, where $t\ge 0$.
\item $m=(b+4s-4d)/2$.
\end{enumerate}
 \end{lemma}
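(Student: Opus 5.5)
The plan is to work from the bipartite structure forced by a $2$-coloring and then count label sums in two ways. Let $f$ be a local antimagic $2$-coloring with labels $x,y$. Every induced path or cycle between branch vertices has even length, so $G$ is bipartite. The $d$ vertices of degree at least $3$ all lie in one colour class.

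\textbf{Part (a).} Suppose some $v_i$ had the smaller label. Then $v_i$ carries at least three incident edges, while its neighbours have degree $2$ and label $y$. Comparing neighbouring degree-$2$ vertices along a path from $v_i$ forces alternating equalities of edge labels, since two consecutive edges on a path through degree-$2$ vertices have labels summing to the same colour. Because labels are distinct, this gives a contradiction with the larger degree. So $f^+(v_i)=y>x$.

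\textbf{Setting up the count.} On each induced path $(\a_1,\b_r)_{2r}$, the internal vertices alternate colours $x,y,x,\dots$. Writing the edge labels as $e_1,\dots,e_{2r}$, we get $e_j+e_{j+1}\in\{x,y\}$ alternately. Hence the labels come in complementary pairs summing to $x$, together with chains differing by $y-x$. Let $s$ be the number of induced paths or cycles. The $x$-coloured vertices are all internal, and their edges partition $E$, which gives $|V_x|\,x = m(m+1)/2$. The $y$-coloured vertices consist of the $d$ branch vertices and the remaining internal ones, and similarly $|V_y|\,y = m(m+1)/2$. Counting vertices per path gives $|V_x| = m/2$ and $|V_y| = m/2 - s + d$. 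This yields $x=m+1$, so $m$ is even and $x$ is odd, and $y=m(m+1)/(m-2s+2d)$.

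\textbf{Part (d).} The difference $y-x$ is realised only by end-edge shifts. Internal pairs sum to $x$, and labels shift by $\pm(y-x)$ along a path, so every label below $y-x$ cannot have a partner ``below''. Such a label must therefore be an end edge. For even $y$, the label $y/2$ cannot pair with itself at a $y$-vertex, so it too must be an end edge.

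\textbf{Parts (e), (f) and the bounds.} Set $u=m-2s+2d$, so $y=m(m+1)/u$ and $m=u+2s-2d$. Clear denominators to get a quadratic in $m$, and substitute $y$. Define $a,b=y+4d-4s-1\mp t$ as the two roots of the resulting quadratic, so that $a+b$ and $ab$ are read off by Vieta. The factorization $ab=8(2s^2-4sd+2d^2+s-d)$ should come from rewriting $m(m+1)=yu$ as $(2u-\cdot)(\cdot)$. Then $m=(b+4s-4d)/2$ comes from identifying $b$ with $2u+\text{const}$. The restriction $a\in\{2i: 1\le i\le 2d\}$ comes from bounding $u$ via $y>x$, and $a\ge 2$ from positivity. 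Finally, $m\ge 3d+2$ and $y\ge(2s^2+s)/d$ follow from part (d): the $2s$ end-edge labels include all of $[1,y-x]$, and each branch vertex sums $2s/d$ end labels on average to $y$. The bound then comes from comparing $dy$ with the sum of the $2s$ end labels, which is at least $1+\cdots+2s$.

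\textbf{Main obstacle.} I expect the hardest step to be part (e): getting exactly the stated factorization and the range of $a$. This is routine but error-prone algebra, and I would check it on the known $d=1$ examples.
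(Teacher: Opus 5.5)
The paper only quotes this lemma from the authors' earlier work, so your sketch has to stand on its own. Two parts are fine: the double count giving $x=m+1$ and $y=m(m+1)/(m-2s+2d)$, and the bound $dy=\sum(\text{end labels})\ge 1+\cdots+2s$. The rest has genuine gaps.

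\textbf{Parts (e) and (f).} This is the most serious gap, because you misidentify the mechanism. The inequality $y>x$ is equivalent to $s>d$ and gives no bound on $a$. Put $k=2s-2d$ and $u=m-k$. Then $yu=(u+k)(u+k+1)$ rearranges to $2y-4k-2=2u+2k(k+1)/u$. So the two numbers with the prescribed sum and with product $4k(k+1)=8(2s^2-4sd+2d^2+s-d)$ are exactly $2u=2m-4s+4d$ and $2k(k+1)/u=2(y-x)-4s+4d$. The latter is a positive even integer, and it is at most $4d$ precisely when $y-x\le 2s$. This is where (d) enters: $1,\dots,y-x$ are distinct end-edge labels, and there are only $2s$ end edges. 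For (f) you also need $2u$ to be the larger root, i.e.\ $b=2u$. This follows from $u=2|V_y|\ge 2d$, which gives $2u\ge 4d\ge a$. Neither inequality is in your plan, and without them you cannot say which root is $b$.

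\textbf{Part (a).} Your argument does not work. Along a path the labels form two arithmetic progressions with difference $\pm(y-x)$; no equalities are forced, and nothing local contradicts the degree. Count per component instead. A component $H$ contains a branch vertex, since an even cycle component would force $x=y$. Every edge has one end of each colour, so $|V_x\cap H|\,x=|V_y\cap H|\,y$. The branch vertices of $H$ lie in one class because all induced paths are even. That class has $m_H/2-s_H+d_H$ vertices against $m_H/2$, and $2s_H\ge 3d_H$ gives $s_H>d_H$, so it carries the larger label. This also justifies your opening claim that all $v_i$ share a colour, which is not automatic for disconnected $G$.

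\textbf{Parts (b), (d) and the inequality $y-x\le 2s$.} These need a hypothesis you never use: some induced path has length at least $4$. Stated correctly, your (d) argument says that a non-end edge meets an internal degree-$2$ vertex coloured $y$ whose other edge has label at most $m=x-1$, so non-end labels exceed $y-x$. But to conclude that every integer of $[1,y-x]$, and $y/2$, is actually a label, you need $y\le 2m-1$. You get this only from the existence of such an internal $y$-vertex, whose two labels sum to $y$. The same hypothesis gives (b) directly, as $m\ge 2s+2\ge 3d+2$.

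The hypothesis cannot be dropped. $B(2^{[3]})$ has $\chi_{la}=2$ with $m=12$, $s=6$, $x=13$, $y=26$. Then $13$ is not a label, $y-x>2s$, the roots are $12=2u$ and $14$, and $(b+4s-4d)/2=13\ne m$. Your sketch never uses $G\ne B(n^{[d]})$, so it would ``prove'' (d) and (f) for this graph.

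Nor does excluding $B(n^{[d]})$ supply a long path in general. Subdivide once each edge of the multigraph on $v_1,v_2,v_3$ with multiplicities $1,2,3$. Label the six $2$-paths $(7,6)$ on $v_1v_2$, $(11,2),(8,5)$ on $v_1v_3$, and $(1,12),(10,3),(9,4)$ on $v_2v_3$. This gives a graph in $\widetilde{\mathcal B}_3$ with the same parameters and the same failure. So make the long-path hypothesis explicit. It holds for every graph in $\mathcal A_d$, since all cycles there have length at least $4$.
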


\nt We note that by applying this lemma within a case-by-case algebraic analysis, one can obtain a set of necessary and sufficient conditions  for a graph $G\in\widetilde{\mathcal B}_d$ for each $d\ge 4$ - analogous to~\cite[Theorem 2.2]{Lau+Shiu+chila2d=3} - with every edge being assigned a unique label, up to isomorphism. 

\ms\nt For each induced path of $G\in\widetilde{ \mathcal B}_d$, the edges that are incident to vertices $v_i, v_j$, $1\le i < j \le d$ are called the end-edges of $G$ and the corresponding labels under a local antimagic 2-coloring form an end-edge label set.  For convenience, throughout this paper, we let
\[D'=\begin{cases}
 [1,y-x] & \mbox{ if $y$ is odd,}\\
 [1, y-x]\cup\{y/2\} & \mbox{ if $y$ is even,}\end{cases}\] which is a subset of the actual end-edge label set $D$, where $x,y$ satisfy Lemma~\ref{lem-induced}. In proving our main theorems, suitable examples are also given directly under certain cases.


\section{Graphs in $\mathcal A_3$}\label{sec:3comp}

\nt Using Lemma~\ref{lem-induced}, the authors obtained the following theorem in~\cite[Theorem 2.2]{Lau+Shiu+chila2d=3} which plays a crucial role in  obtaining all the graphs in $\mathcal A_3$ in this section.




 \begin{theorem}\label{thm-iff-d3} A graph $G$ with $s\ge 5$ induced paths (or cycles) is in $\widetilde{\mathcal B}_3$ if and only if
\begin{table}[H]
\fontsize{10}{14}\selectfont
\begin{tabular}{l|l|l|l|l}
Case & $s$ & Induced paths & Induced paths & Induced path\\\hline
A-1 & $s=6l+3\ge 9$ & $l$ paths of length $4l+2$ & $5l+3$ paths of length $4l$ & not applicable \\
A-2 & $s=6l+1\ge 7$ & $3l$ paths of length $4l$ & $3l+1$ paths of length $4l-2$& not applicable \\\hline
B-1 & $s=5l\ge 10$ & $5l-1$ paths of length $4l-2$ & one path of length $2l-2$ & not applicable \\
B-2 & $s=5l+3\ge 8$ & $l$ paths of length $4l+2$ & $4l+2$ paths of length $4l$ &  one path of length $2l$\\\hline
C$^*$ & $s=4l+3\ge 7$ & $l$ path of length $4l+2$ & $3l+2$ paths of length $4l$ & not applicable \\\hline
D-1$^*$ & $s=3l\ge 6$ & $l-1$ paths of length $4l-2$ & $2l-1$ paths of length $4l-4$ &  one path of length $2l-2$\\
D-2$^*$ & $s=3l+1\ge 7$ & $3l-1$ paths of length $4l-2$ & one path of length $2l-2$ & not applicable\\\hline
E$^\dag$ & $s=2l+1\ge 5$ & $l-1$ paths of length $4l-2$ & $l$ paths of length $4l-4$ & not applicable \\\hline
F$^\dag$ & not applicable & $s-3$ path of length $4s-10$ & one path of length $2s-6$ & not applicable \\\hline
\end{tabular}\\
\normalsize
\nt $^*$ One of these paths is split into two paths of even length at least 4.\\
\nt $^\dag$ Two of these paths are split into two paths of even length at least 4, respectively, or else one of these paths is split into three paths of even length at least 4.
\caption{Exact value of $s$ and the corresponding number of paths and lengths.}\label{table-1}
\end{table}

\nt Moreover, the edge labels are unique (up to isomorphism) as follows:
\begin{table}[H]
\fontsize{10}{14}\selectfont
\begin{tabular}{l|l|l}
Case & Path length & Consecutive edge labels      \\\hline
A-1 & $4l+2$ & $A_{2l+1}(i;12l+6)\diamond A_{2l+1}(24l^2+14l+1-i; -12l-6)$, $1\le i\le l$ \\
 & $4l$  & $A_{2l}(2l+j; 12l+6)\diamond A_{2l}(24l^2+12l+1-j; -12l-6)$, $1\le j\le 5l+3$ \\\hline
A-2 & $4l$ & $A_{2l}(i; 12l+2)\diamond A_{2l}(24l^2-2l-1-i; -12l-2)$, $1\le i\le 3l$  \\
  & $4l-2$ &  $A_{2l-1}(6l+j; 12l+2)\diamond A_{2l-1}(24l^2-8l-1-j; -12l-2)$, $1\le j\le 3l+1$ \\\hline
B-1 & $4l-2$ & $A_{2l-1}(i; 10l-1) \diamond A_{2l-1}(20l^2-12l+1-i; -10l+1)$, $1\le i\le 5l-1$ \\
 & $2l-2$ & $A_{l-1}(10l-1; 10l-1)\diamond A_{l-1}(20l^2-12l+1-i; -10l-1)$ \\\hline
B-2 & $4l+2$ & $A_{2l+1}(i; 10l+5)\diamond A_{2l+1}(20l^2+12l+1-i; -10l-5)$, $1\le i\le l$ \\
  & $4l$ & $A_{2l}(2l+j; 10l+5)\diamond A_{2l}(20l^2+10l+1-j; -10l-5)$, $1\le j\le 4l+2$ \\
 & $2l$ & $A_{l}(6l+3;10l+5)\diamond A_{l}(20l^2+6l-2; -10l+5)$ \\\hline
C & $4l+2$ & $A_{2l+1}(i; 8l+4)\diamond A_{2l+1}(16l^2+10l+1-i; -8l-4)$, $1\le i\le l$ \\
 & $4l$ & $A_{2l}(2l+j; 8l+4)\diamond A_{2l}(16l^2+8l+1-j; -8l-4)$, $1\le j\le 3l+2$ \\\hline
D-1 & $4l-2$ &  $A_{2l-1}(i; 6l-3)\diamond A_{2l-1}((2l-1)(6l-5)-i; -6l+3)$, $1\le i\le l-1$\\
 & $4l-4$ & $A_{2l-2}(2l-2+j; 6l-3)\diamond A_{2l-2}((2l-1)(6l-5)-2l+2-j; -6l+3)$, $1\le j\le 2l-1$ \\
 & $2l-2$ & $A_{l-1}(4l-2; 6l-3)\diamond A_{l-1}((6l-7)(2l-1); -6l+3)$ \\\hline
D-2 & $4l-2$  & $A_{2l-1}(i; 6l-1)\diamond A_{2l-1}((2l-1)(6l-1)-i; -6l+1)$, $1\le i\le 3l-1$ \\
& $2l-2$ & $A_{l-1}(6l-1; 6l-1)\diamond A_{l-1}((2l-2)(6l-1); -6l+1)$ \\\hline
E & $4l-2$ & $A_{2l-1}(i; 4l-2)\diamond A_{2l-1}(8l^2-10l+3-i; -4l+2)$, $1\le i\le l-1$ \\
  & $4l-4$ & $A_{2l-2}(2l-2+j; 4l-2) \diamond A_{2l-2}(8l^2-12l+5-j; -4l+2)$, $1\le j\le l$  \\\hline
F & $4s-10$ & $A_{2s-5}(i; 2s-5) \diamond A_{2s-5}((2s-5)^2-i; -2s+5)$, $1\le i\le s-3$  \\
  & $2s-6$ & $A_{s-3}(2s-5; 2s-5) \diamond A_{s-3}((2s-6)(2s-5); -2s+5)$ \\\hline
\end{tabular}
\caption{Edge labels of each induced path.}\label{table-2}
\end{table}
\end{theorem}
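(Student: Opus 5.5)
Theorem~\ref{thm-iff-d3} is quoted from~\cite[Theorem 2.2]{Lau+Shiu+chila2d=3}, but the proof would run through Lemma~\ref{lem-induced} with $d=3$. Parts (a)--(d) give $x=m+1$ odd, $f^+(v_i)=y>x$ at the three high-degree vertices, and $D'\subseteq D$. Part (e) becomes
\[ab=8(2s^2-12s+18+s-3)=8(2s^2-11s+15)=8(2s-5)(s-3),\]
with $a\in\{2,4,6,8,10,12\}$, $b\ge a$ and $a+b=2y-8s+22$. Part (f) then gives $m=(b+4s-12)/2$.

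Step 1 is to enumerate $a$. Each of the six values gives $b=8(2s-5)(s-3)/a$, which must be an integer of the correct parity. This yields $y$ and $m$ as explicit functions of $s$, and imposes a congruence on $s$ (for example $s$ odd, $s\equiv 1,3 \pmod 6$, $s\equiv 0,3\pmod 5$, and so on). The resulting families should match rows A--F of Table~\ref{table-1}. For instance, $a=2$ gives $b=4(2s-5)(s-3)$, and hence $m$ is roughly the size in row F. One must also check consistency with $y=m(m+1)/(m-2s+6)$ from part (c); any branch that fails integrality or $b\ge a$ is discarded.

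Step 2 is to determine the path structure for each admissible $(s,m,y)$. Along an induced path of length $2r$ with end-labels $\alpha_1<\beta_r$, the interior vertices alternate sums $x$ and $y$. So consecutive labels follow an arithmetic pattern $A_r(\cdot;\pm(y-x))$, two interleaved progressions with common difference $y-x$; this is exactly the $\diamond$ notation in Table~\ref{table-2}. The end-edge label set must contain $D'=[1,y-x]$ (together with $y/2$ when $y$ is even). Since the labels in $[1,m]$ are used once each, the residues modulo $y-x$ force each path's edges to be the labels congruent to a fixed starting value $i$. Counting how many labels of each residue class lie in $[1,m]$ gives the lengths: most classes have about $m/(y-x)$ members and a few have one fewer. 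This produces the two or three length types in each row.

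Step 3 handles the vertex sums at $v_1,v_2,v_3$. Each must equal $y$, so the end-edge sums of the paths must be distributed among the three vertices. For the starred and daggered cases, some residue class is too long for the pairing. It must then be cut into two or three subpaths of even length at least $4$, with each cut point a vertex $v_j$ that receives two end-edges. The sufficiency direction follows by exhibiting the labelings in Table~\ref{table-2} and checking the sums directly. Uniqueness up to isomorphism follows because each residue class and its starting point are forced.

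I expect the main obstacle to be Step 2/3: proving that the arithmetic-progression structure is forced, and correctly deciding when paths must split and how, without missing configurations. I would handle this by tracking, for each residue class, the partial sums $\alpha_1+\beta_r$ and requiring that the end-edge labels can be partitioned into three groups each of sum $y$.
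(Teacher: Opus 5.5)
\textbf{Verdict: the proposal has a genuine gap.} Step~1 is correct, but Step~2 misidentifies the path structure, the splitting mechanism is wrong, and the sufficiency argument does not work.

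\textbf{Step 1.} This follows the route the paper indicates; the theorem is imported from Lau--Shiu, where it is derived from Lemma~\ref{lem-induced} by a case analysis. With $d=3$ one gets $ab=8(2s-5)(s-3)$. The integrality and parity requirements on $b$ and on $m=b/2+2s-6$ (recall $x=m+1$ is odd) sort the six values of $a$ into the rows:
\begin{itemize}
\item $a=12$ gives A-1/A-2 ($s\equiv 3,1\pmod 6$);
\item $a=10$ gives B-1/B-2 ($s\equiv 0,3\pmod 5$);
\item $a=8$ gives C, $a=6$ gives D-1/D-2, $a=4$ gives E, and $a=2$ gives F.
\end{itemize}

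\textbf{Step 2.} Since $e_1+e_2=x$, a path with first label $i$ has its odd-position labels $\equiv i$ and its even-position labels $\equiv x-i\pmod{y-x}$. So a path occupies the \emph{pair} of classes $\{i,x-i\}$, not a single class. It lies in one class only when $2i\equiv x$; these self-paired classes produce the short paths ending at $y/2$ in Cases B, D and F. Path lengths are therefore sums of two class sizes, and counting single classes, as you propose, gives wrong answers. In A-1, $m=2l(12l+6)+2l$ and $x\equiv 2l+1\pmod{12l+6}$, so the classes have $2l+1$ or $2l$ elements, while the paths have lengths $4l+2$ and $4l$.

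\textbf{Splitting.} The splits do not come from a class being \emph{too long}; the missing idea is an end-edge count.
\begin{itemize}
\item View each path as two chains, namely its labels at odd positions counted from either end. The bottoms of the chains are exactly the $2s$ end-edges.
\item If a chain ends at $c$ with $c+(y-x)\le m$, then $\gamma=c+(y-x)$ is the bottom of another chain. Also $x-c=y-\gamma$ is the end-edge next to $c$. Both labels exceed $y-x$.
\item Hence the end-edge labels outside $D'$ come in pairs $\{\gamma,y-\gamma\}$. The number of cuts is $(2s-|D'|)/2$, which is $0$ in Cases A and B, $1$ in C and D, and $2$ in E and F. This is exactly the origin of the footnotes $^*$ and $^{\dagger}$.
\end{itemize}
The two end-edges created by a cut sum to $y$. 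So they cannot be incident with the same $v_j$, which has degree at least $3$; this contradicts your description of the cut point as a $v_j$ receiving both.

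\textbf{Sufficiency.} The converse cannot be settled by checking sums directly. Table~\ref{table-2} only guarantees the alternating sums $x,y$ at degree-$2$ vertices, whereas $f^+(v_i)=y$ depends on how the path ends are attached. In A-1, for example, every end label is at most $12l+6$, so a $v_i$ of degree $3$ receives at most $36l+18<(2l+1)(12l+7)=y$. The \emph{if} direction must therefore be read with the extra requirement that the end-edges at each $v_i$ can be grouped to sum to $y$. That realizability question is exactly what Theorem~\ref{thm-3comp} settles case by case for $\mathcal A_3$.
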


\nt We shall follow the notation and cases in~\cite[Theorem 2.2]{Lau+Shiu+chila2d=3}.

\begin{theorem}\label{thm-3comp}   A graph $G$ is in $\mathcal A_3$  if and only if $G$ is a graph in one of the following three tables.
\begin{table}[H]
\fontsize{10}{17}\selectfont
\begin{tabular}{l|l}
Case & $G$ ($1\le \k\le 3$ and $r, r_1, r_2, \ge 2$ unless stated otherwise,  $\n_\k, s_\k, t_1,t_2, t \ge 0$ if applicable)   \\\hline
A-1 &  $H_\k \cong C((4l)^{[(12l+7-\n_\k)/7]}, ((4l+2)^{[\n_\k]})$,  $l\in[3,\infty)\setminus\{4,5,8,11\}$, \\
    & $\n_\k\le l$, $\n_\k = \r+7s_\k$,  $\r\equiv 5l\pmod{7}$,  $\sum \n_\k=l=3\r+7\sum  s_\k$. \\\hline
A-2 &  $H_\k\cong  C((4l-2)^{[(4l+1-\n_\k)/3]}, (4l)^{[\n_\k]})$,  $l\ge 2$, $\n_\k\le 3l$,  \\
     &  $\n_\k = \r + 3s_\k$, $\r \equiv l+1\pmod{3}$, $\sum \n_\k = 3l = 3\r + 3\sum s_\k$.\\\hline
B-1 & $2C((4l-2)^{[2l]})+C(2l-2,(4l-2)^{[l-1]})$, $l\ge 3$. \\\hline
B-2  &  $C(2l, (4l)^{[(5l-\n_1)/6]}, (4l+2)^{[\n_1]})+ H_2 + H_3$, $H_\k \cong C((4l)^{[(10l+6-\n_\k)/6]}, (4l+2)^{[\n_\k]})$, $\k=2,3$, \\
  &  $l\ge 2$, $\n_\k\le l$, $\n_\k = \r_\k + 6s_\k$, $\r_1\equiv 5l\pmod{6}$, $\r_2,\r_3\equiv 4l\pmod{6}$, $\sum\n_\k = l = \sum (\r_\k + 6s_\k)$.\\\hline
C  &  $C(2r, (4l)^{[t_1]}, (4l+2)^{[4r-5t_1]}) + C(4l-2r, (4l)^{[t_2]}, (4l+2)^{[8l-4r-5t_2]}) + $ \\
   & $C((4l)^{[3l+1-t_1-t_2]}, (4l+2)^{[5t_1+5t_2-7l]})$, $r\le l$, $4r-l\le 5t_1\le 4r$,  $7l-4r\le 5t_2 \le 8l-4r$, $7l\le 5(t_1+t_2)$. \\\hline
  &   $C(2r,(4l)^{[t_1]}, (4l+2)^{[4r-5t_1]}) + C(4l+2-2r, (4l)^{[t_2]},(4l+2)^{[8l+4-4r-5t_2]})  + $\\
  &  $C((4l)^{[3l+2-t_1-t_2]}, (4l+2)^{[5t_1+5t_2-7l-5]})$,  $r\le l$, $4r-l +1\le 5t_1\le 4r$, \\
 &  $7l+5-4r \le 5t_2 \le 8l+4-4r$, $7l+5\le 5(t_1+t_2)$.  \\\hline
D-1 & $C(2r, (4l-4)^{[t_1]}, (4l-2)^{[3r-4t_1]}) + C(2l-2-2r, (4l-4)^{[t_2]}, (4l-2)^{[3l-3r-3-4t_2]}) + $ \\
  &  $C((4l-4)^{[2l-1-t_1-t_2]}, (4l-2)^{[4t_1+4t_2+2-2l]})$, $r\le (l-1)/2$,  $2r=l-1\implies t_1\le t_2$, \\
   & $3r-l+1\le 4t_1\le 3r$, $2l-3r-2\le 4t_2\le 3l-3r-3$, $2l-2\le 4(t_1+t_2)$. \\\hline
   &  $C(2r, 2l-2, (4l-4)^{[t_1]}, (4l-2)^{[3r-4t_1-3l-1]}) + C(4l-4-2r, (4l-4)^{[t_2]}, (4l-2)^{[6l-6-3r-4t_2]}) +$\\
  & $C((4l-4)^{[2l-2-t_1-t_2]}, (4l-2)^{[4t_1+4t_2+6-2l]})$, $r\le l-1$, $3r-4l \le 4t_1 \le 3r-3l-1$, \\
   &  $5l-5-3r \le 4t_2 \le 6l-6-3r$, $2l - 6\le 4(t_1+t_2)$. \\\hline
   &  $C(2r, (4l-4)^{[t_1]}, (4l-2)^{[3r-4t_1]}) + C(4l-4-2r, 2l-2, (4l-4)^{[t_2]}, (4l-2)^{[3l-7-3r-4t_2]}) +$ \\
   &  $C((4l-4)^{[2l-2-t_1-t_2]}, (4l-2)^{[4t_1+4t_2+6-2l]})$, $r\le l-1$, $3r-l+1\le 4t_1\le 3r$, \\
   &  $2l-6-3r \le 4t_2\le 3l-7-3r$, $2l-6 \le 4(t_1+t_2)$.  \\\hline
   &   $C(2r, (4l-4)^{[t_1]}, (4l-2)^{[3r-4t_1]}) + C(4l-4-2r, (4l-4)^{[t_2]}, (4l-2)^{[6l-6-3r-4t_2]}) + $  \\
   &  $C(2l-2, (4l-4)^{[2l-2-t_1-t_2]}, (4l-2)^{[4t_1+4t_2+5-5l]})$, $r\le l-1$, $3r-l+1\le 4t_1\le 3r$, \\
   &   $5l-5-3r  \le 4t_2\le 6l-6-3r$,  $5l-5 \le 4(t_1+t_2)$.  \\\hline
  &   $C(2r, 2l-2, (4l-4)^{[t_1]}, (4l-2)^{[3r-3l-1-4t_1]}) + C(4l-2-2r, (4l-4)^{[t_2]}, (4l-2)^{[6l-3-3r-4t_2]}) + $\\
   &  $C((4l-4)^{[2l-1-t_1-t_2]}, (4l-2)^{[4t_1+4t_2+1-5l]})$, $r\le l-1$,  $3r-4l+1 \le 4t_1\le 3r-3l-1$, \\
  &   $5l-1-3r\le 4t_2\le 6l-3-3r$, $5l-1\le 4(t_1+t_2) \le 6l-3$.  \\\hline
  &  $C(2r,  (4l-4)^{[t_1]}, (4l-2)^{[3r-4t_1]}) + C(4l-2-2r, 2l-2, (4l-4)^{[t_2]}, (4l-2)^{[3l-4-3r-4t_2]}) +$ \\
   &  $C((4l-4)^{[2l-1-t_1-t_2]}, (4l-2)^{[4t_1+4t_2+1-5l]})$, $r\le l-1$, $3r-l+2 \le 4t_1\le 3r$, \\
  &   $2l-2-3r \le 4t_2\le 3l-3r-4$, $5l-1 \le 4(t_1+t_2)$.   \\\hline
   &  $C(2r,  (4l-4)^{[t_1]}, (4l-2)^{[3r-4t_1]}) + C(4l-2-2r, (4l-4)^{[t_2]}, (4l-2)^{[6l-3-3r-4t_2]}) + $\\
   &  $C(2l-2, (4l-4)^{[2l-1-t_1-t_2]}, (4l-2)^{[4t_1+4t_2+1-5l]})$, $r\le l-1$,  $3r-l+2 \le 4t_1\le 3r$,  \\
   &  $5l-1-3r \le 4t_2\le  6l-3-3r$, $5l-1 \le 4(t_1+t_2)$.\\\hline
D-2 &   No solutions.\\\hline
 \end{tabular}
\caption{3-component graphs of one-point unions of cycles - I}\label{tab:3com-1}
\end{table}

\begin{table}[H]
\fontsize{10}{17}\selectfont
\begin{tabular}{l|l}
Case & $G$ (sum over $1\le \k\le 3$ and $r,r_1,r_2\ge 2$ unless stated otherwise,  $\n_\k, s_\k, t_1,t_2, t\ge 0$ if applicable)  \\\hline
E   &   $C(2r_1, (4l-4)^{[t_1]}, (4l-2)^{[2r_1-3t_1]}) + C(2r_2,(4l-4)^{[t_2]}, (4l-2)^{[2r_2-3t_2]}) + $  \\
    & $C(4l-4-2r_1, 4l-4-2r_2, (4l-4)^{[l-2-t_1-t_2]}, (4l-2)^{[l+3t_1+3t_2-2r_1-2r_2-1]})$, $l\ge 4$, $r_1, r_2\le l-1$, \\
   &  $2r_1-l+1 \le 3t_1 \le 2r_1$,  $2r_2-l+1 \le 3t_2 \le 2r_2$, $t_1+t_2\le l-2$, \\
  & $2r_1+2r_2-l+1\le 3(t_1+t_2) \le 2(r_1+r_2)\le 4l-7$, $r_1=r_2 \implies t_1\le t_2$, and $t_1=t_2 \implies r_1\le r_2$.  \\\hline
  & $C(2r_1, (4l-4)^{[t_1]}, (4l-2)^{[2r_1-3t_1]})  +  C(4l-4-2r_2,  (4l-4)^{[t_2]}, (4l-2)^{[4l-2r_2-3t_2-4]}) +$ \\
  & $C(2r_2, 4l-4-2r_1, (4l-4)^{[l-2-t_1-t_2]}, (4l-2)^{[3t_1+3t_2+2r_2+3-2r_1-3l]})$,\\
  & $2r_1 \le 2r_2-3 \le 2l-5$, $2r_1-l+1\le 3t_1\le 2r_1$, $3l-2r_2-3\le 3t_2 \le 4l - 2r_2 -4$,\\
   & $2r_1+3l-2r_2-3\le 3(t_1+t_2) \le  4l-7$,  $r_1+r_2 = 2l-2 \implies t_1\le t_2$.\\\hline
  &  $C(2r_1,(4l-4)^{[t_1]}, (4l-2)^{[2r_1-3t_1]})+C(2r_2,(4l-4)^{[t_2]}, (4l-2)^{[2r_2-3t_2]}) + $\\
    & $C(4l-4-2r_1, 4l-2-2r_2, (4l-4)^{[l-1-t_1-t_2]}, (4l-2)^{[l+3t_1+3t_2-2r_1-2r_2-2]})$, $l\ge 4$, $r_1, r_2\le l-1$, \\
    & $2r_1-l+2\le 3t_1\le 2r_1$, $2r_2-l+2\le 3t_2\le 2r_2$, $t_1+t_2\le l-1$,  \\
    &  $2r_1+2r_2+2-l \le 3(t_1+t_2) \le 2r_1+2r_2\le 4l-5$.  \\\hline
   &  $C(2r_1,(4l-4)^{[t_1]}, (4l-2)^{[2r_1-3t_1]})+C(4l-2-2r_2,(4l-4)^{[t_2]}, (4l-2)^{[4l-2r_2-3t_2-2]}) + $\\
    & $C(2r_2,4l-4-2r_1,(4l-4)^{[l-1-t_1-t_2]}, (4l-2)^{[3t_1+3t_2+2r_2-2r_1-3l]})$, $7\le 2r_1+3\le 2r_2 \le 2l-2$, \\
    &  $r_2\ge 4$, $2r_1-l+2\le 3t_1\le 2r_1$, $3l-2r_2 \le 3t_2 \le 4l - 2r_2-2$, $t_1+t_2\le l-1$, \\
   & $3l+2r_1-2r_2\le 3(t_1+t_2) \le 4l+2r_1-2r_2-2$.  \\\hline
   &   $C(2r_2,(4l-4)^{[t_1]}, (4l-2)^{[2r_2-3t_1]})+C(4l-4-2r_1,(4l-4)^{[t_2]}, (4l-2)^{[4l-2r_1-3t_2-4]}) + $  \\
  &  $C(2r_1, 4l-2-2r_2, (4l-4)^{[l-1-t_1-t_2]}, (4l-2)^{[3t_1+3t_2+2r_1+2-2r_2-3l]})$,   $5\le 2r_2+1\le 2r_1\le 2l-2$, \\
  &  $2r_2 -l+2\le 3t_1 \le 2r_2$, $3l-2r_1-2 \le 3t_2 \le 4l-2r_1-4$, $t_1+t_2 \le l-1$, \\
   &   $3l+2r_2-2r_1-2\le 3(t_1+t_2) \le 4l+2r_2-2r_1-4$. \\\hline
  &  $C(2r_1,(4l-4)^{[t_1]}, (4l-2)^{[2r_1-3t_1]})+C(2r_2,(4l-4)^{[t_2]}, (4l-2)^{[2r_2-3t_2]}) +$ \\
   & $C(4l-2-2r_1,4l-2-2r_2, (4l-4)^{[l-t_1-t_2]}, (4l-2)^{[3t_1+3t_2+l-2r_1-2r_2-3]})$,  $2\le r_1\le r_2\le l-1$, \\
   &  $t_1+t_2\le l$, $2r_1-l+3 \le 3t_1\le 2r_1$,  $2r_2 - l + 3\le 3t_2 \le 2r_2$,  \\
   & $2r_1+2r_2-l+3\le 3(t_1+t_2) \le 2r_1+2r_2 \le 4l-3$, $r_1=r_2 \implies t_1\le t_2$, and $t_1=t_2 \implies r_1\le r_2$. \\\hline
  &  $C(2r_1,(4l-4)^{[t_1]}, (4l-2)^{[2r_1-3t_1]})+C(4l-2-2r_2,(4l-4)^{[t_2]}, (4l-2)^{[4l-2r_2-3t_2-2]}) +  $ \\
   &  $C(2r_2,4l-2-2r_1,(4l-4)^{[l-t_1-t_2]}, (4l-2)^{[3t_1+3t_2+2r_2-2r_1-3l-1]})$,  $5\le 2r_1+1 \le 2r_2 \le 2l-2$,\\
   & $t_1+t_2\le l$, $2r_1-l+3\le 3t_1 \le 2r_1$, $3l+1-2r_2 \le 3t_2 \le 4l-2r_2-2$,  \\
   & $2r_1+3l+1-2r_2\le 3(t_1+t_2) \le 2r_1+4l-2r_2-2 \le 4l-3$.\\\hline
  &  $C(2r_1, (4l-4)^{[t_1]}, (4l-2)^{[2r_1-3t_1]}) + C(2r_2, (4l-4)^{[t_2]}, (4l-2)^{[2r_2-3t_2]}) +$ \\
   &  $C(4l-4-2r_1-2r_2, (4l-4)^{[l-1-t_1-t_2]}, (4l-2)^{[l+3t_1+3t_2 -2r_1-2r_2-1]})$,  $r_1\le r_2$, $r_1+r_2\le 2l-4$, \\
   &  $2r_1-l+1\le 3t_1\le 2r_1$, $2r_2-l+1 \le 3t_2 \le 2r_2$, $2r_1+2r_2-l+1\le 3(t_1+t_2) \le 3(l-1)$, \\
  & $r_1=r_2 \implies t_1\le t_2$, and $t_1=t_2 \implies r_1\le r_2$. \\\hline
  &  $C(2r_1, (4l-4)^{[t_1]} , (4l-2)^{[2r_1-3t_1]}) + C(2r_2, (4l-4)^{[t_2]}, (4l-2)^{[2r_2-3t_2]}) + $  \\
  &  $C(2r, (4l-4)^{[t]}, (4l-2)^{[2r-3t]})$,  $t_1+t_2+t=l$, $r_1+r_2+r=2l-1$, $r_1\le r_2\le r$,  \\
  &  $r_1=r_2=r \implies t_1 \le t_2 \le t$, $2r'-l+2 \le 3t' \le 2r'$ for $(r',r'') \in \{(r_1,t_1), (r_2,t_2), (r,t)\}$, \\
  & $2r' + 2r'' - l + 2 \le 3(t'+t'')\le 3l$  for $(r', r'', t', t'') \in \{(r_1,r_2, t_1,t_2), (r_1,r,t_1,t), (r_2,r,t_2,t)\}$. \\\hline
\end{tabular}
\caption{3-component graphs of one-point unions of cycles - II}\label{tab:3com-2}
\end{table}

\begin{table}[H]
\fontsize{10}{17}\selectfont
\begin{tabular}{l|l}
Case & $G$ (sum over $1\le \k\le 3$ and $r,r_1,r_2\ge 2$ unless stated otherwise, $\n_\k, s_\k, t_1,t_2, t\ge 0$ if applicable)  \\\hline
F  & $C(2r_1, (4s-10)^{[r_1]}) + C(2r_2, (4s-10)^{[r_2]}) + $ \\
   & $C(4s-10-2r_1, 4s-10-2r_2, (4s-10)^{[s-r_1-r_2-5]}), 2s-6)$, $r_1, r_2\le s-3$, $r_1+r_2\le s-5$. \\\hline
  &  $C(2r_1, (4s-10)^{[r_1]}) + C(4s-10-2r_2, (4s-10)^{[s-4-r_2]}, 2s-6) + $\\
  &  $C(2r_2, 4s-10-2r_1, (4s-10)^{[r_2-r_1-1]})$, $r_1\le s-3$, $r_2\le s-4$, $1\le r_2-r_1 \le s-4$. \\\hline
  &  $C(2r_1, (4s-10)^{[r_1]}) + C(2r_2, (4s-10)^{[r_2]}) + C(4s-10-2r_1, 2s-6-2r_2, (4s-10)^{[s-4-r_1-r_2]})$, \\
  &  $r_1\le s-3$, $r_2\le (s-4)/2$, $r_1+r_2\le s-4$.  \\\hline
  &  $C(2r_1, (4s-10)^{[r_1]}) + C(2s-6-2r_2, (4s-10)^{[s-r_2-3]}) + C(2r_2, 4s-10-2r_1, (4s-10)^{[r_2-r_1-1]})$, \\
  &  $r_1\le r_2-1\le s/2-3$, $r_2-r_1\le s-3$.  \\\hline
  &  $C(2r_1, (4s-10)^{[r_1-s+1]}, (2s-6)) + C(2r_2, (4s-10)^{[r_2]}) + C(2r,  (4s-10)^{[r]})$, \\
   &  $s-1 \le r_1\le 2s-9$, $2\le r_2\le s-6$, $r_1+r_2+r=2s-5$, $s+1\le r_1+r_2 \le 2s-7$. \\\hline
  &   $C(2r_1, (4s-10)^{[r_1]}) + C(2r_2, (4s-10)^{[r_2]}) + C(2r, (4s-10)^{[r]})$, $r_1\le r_2\le r$, \\
   & $2\le r_1, r_2, r\le s-7$, $r_1+r_2+r=s-3$. \\\hline
\end{tabular}
\caption{3-component graphs of one-point unions of cycles - III}\label{tab:3com-3}
\end{table}

\end{theorem}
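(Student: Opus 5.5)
Since $\mathcal A_3\subseteq\widetilde{\mathcal B}_3$, every $G\in\mathcal A_3$ falls into one of the cases A-1 through F of Theorem~\ref{thm-iff-d3}, and its edge labels are forced (up to isomorphism) by Table~\ref{table-2}. The plan is to go through these cases one by one. In each case we determine all ways of distributing the induced paths of Table~\ref{table-1} into three one-point unions of cycles $H_1,H_2,H_3$ that realise the same labelling. Each $H_\k$ has its own degree-$2|X_\k|$ centre $v_\k$, and $f^+(v_\k)=y$ for all $\k$ by Lemma~\ref{lem-induced}(a). Hence the task is exactly to partition the set of induced cycles into $X_1,X_2,X_3$ with $S(X_\k)=y$ for each $\k$. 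An induced path $(\a_1,\b_r)$ becomes a cycle of $H_\k$ when both of its ends are attached at $v_\k$. For the split paths marked $^*$ and $^\dag$, the two or three even-length pieces $(\a_1,\b_r)$ may end up as cycles in different components. So those pieces contribute their end labels separately, and this is where the parameters $r,r_1,r_2$ enter.

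In each case, the first step is to read off from Table~\ref{table-2} the end-edge pair of every induced path: the first label of the first block $A$ and the last label of the second block. Writing these as linear expressions in the indices $i,j$ and in $l$, the sum $\a_1+\b_r$ is typically a constant depending only on the path length. For example, in A-1 the paths of length $4l+2$ and $4l$ should each have a fixed sum. So $S(X_\k)$ depends only on how many cycles of each length $H_\k$ contains. Let $\n_\k$ (respectively $t_\k$) count the longer cycles in $H_\k$. The conditions $S(X_\k)=y$ then become linear Diophantine equations, and $y$ is given by Lemma~\ref{lem-induced}(c). For A-1, A-2 and B-2 these equations give the congruences $\n_\k\equiv\r\pmod 7$, $\pmod 3$, $\pmod 6$ and the stated counts $(12l+7-\n_\k)/7$ and so on, with $\sum\n_\k$ equal to the total number of long paths. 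Solvability of the congruence system, subject to $0\le\n_\k\le l$ and $\sum\n_\k=l$, gives the exceptional set $\{4,5,8,11\}$ and $l\ge3$ in A-1. This should come out of a small check of $l$ modulo $7$ and small values.

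For cases with a short path of length $2l-2$, $2l$ or $2s-6$, and for the starred and daggered cases, a split path contributes pieces whose end-edge sums are not constant. These sums depend on where the split occurs, namely the piece lengths $2r$ and $4l-2r$ (etc.). We must consider which component receives each piece. The short path and each piece may lie in a common component or in different ones. Every such placement gives a sub-case, which is a row of Tables~\ref{tab:3com-1}--\ref{tab:3com-3}. In each sub-case the balance equations $S(X_\k)=y$ together with nonnegativity of the counts produce the inequality windows for $t_1,t_2$ in terms of $r,l$. Symmetry between identical components yields tie-breaking conditions such as $r_1=r_2\Rightarrow t_1\le t_2$, which ensure that isomorphic graphs are listed once. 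For B-1 and D-2 the equations either force a unique configuration or are inconsistent, which is how ``no solutions'' for D-2 is obtained. For sufficiency, each listed graph inherits a labelling from Table~\ref{table-2}, and the balance equations are exactly what is needed for $f^+(v_\k)=y$. So the converse is automatic once the computations are reversible.

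The main obstacle will be cases E and F, the daggered ones. There, either two paths are each split into two pieces, or one path is split into three. The number of placements of up to four or more pieces among three components is large. Each placement needs its own verification of the end-edge sums of the pieces (computed from the split point within the $A_{2l-1}\diamond A_{2l-1}$ sequence), of the balance equations, and of the range constraints. To keep this under control, I would first derive a general formula for the end-edge sum of a piece of length $2r$ cut from a path of given length. Then I would enumerate placements by the multiset of pieces per component, discarding those that violate $|X_\k|\ge2$ or give negative counts. Finally I would quotient by component permutation and by the symmetry $r\leftrightarrow$ complementary length.
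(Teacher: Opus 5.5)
Your plan is essentially the paper's proof. You reduce via Theorem~\ref{thm-iff-d3} to Cases A-1 through F and use the constant end-edge sums per path length to turn $S(X_\k)=y$ into linear equations and congruences on the cycle counts. In the split cases you compute the piece sums, which depend only on the piece lengths, so one may fix which path is split. You then discard placements in which two pieces' sums already exceed $y$ or a component has fewer than two cycles, and quotient by symmetry. What remains is exactly the explicit case-by-case computation the paper carries out to produce the rows of the three tables.
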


\begin{proof} By Theorem~\ref{thm-iff-d3}~\cite{Lau+Shiu+chila2d=3}, we only need to obtain all the disconnected one-point unions of cycles solutions under each of the cases as follows. In Case A-1 to Case B-2, we assume $\m_\k, \n_\k, s_\k \ge 0$ for $1\le \k\le 3$. In all other cases, we assume $r_1, r_2, r\ge 2$ and $t_1,t_2,t\ge 0$ unless stated otherwise.

\ms\nt {\bf Case A-1.} 
In this case, $s=6l+3\ge 9$, $y=24l^2+26l+7=(2l+1)(12l+7)$ and $D=[1, 12l+6]$.
There are $l$ induced paths of length $4l+2$, namely, $(i, 2l+1-i)_{4l+2}$, $1\le i\le l$; and $5l+3$ induced paths of length $4l$, namely, $(2l+j, 12l+7-j)_{4l}$, $1\le j\le 5l+3$.
Thus, the sum of two end-edge labels of an induced path of length $4l+2$ is $2l+1$; and that of an induced path of length $4l$ is $7(2l+1)$.

\ms\ms\nt Suppose $X_\k$ contains $\mu_\k$ induced paths of length $4l$, and $\nu_\k$ induced paths of length $4l+2$, where $\mu_\k\le 5l+3$, $\nu_\k\le l$. Therefore, $S(X_\k)=7(2l+1)\mu_\k+(2l+1)\nu_\k=(2l+1)(7\mu_\k+\nu_\k)$. Under the requirement, $7\mu_\k+\nu_\k= 12l+7$, $\mu_1+\mu_2+\mu_3=5l+3$ and $\nu_1+\nu_2+\nu_3=l$. By solving these linear equations, we have
\[\left\{\begin{aligned}
\mu_3 & = 5l+3-\mu_1-\mu_2\\
\nu_1 & = 12l+7-7\mu_1\\
\nu_2 & = 12l+7-7\mu_2\\
\nu_3 & = 7\mu_1+7\mu_2-23l-14
\end{aligned}\right. \mbox{ or equivalently }\left\{\begin{aligned}
\mu_1 & = \textstyle\frac{12}{7}l - \frac{1}{7}\nu_1 + 1\\
\mu_2 & = \textstyle\frac{12}{7}l - \frac{1}{7}\nu_2 + 1\\
\mu_3 & =\textstyle \frac{11}{7}l + \frac{1}{7}\nu_1 + \frac{1}{7}\nu_2 + 1\\
\nu_3 &= l - \nu_1 - \nu_2
\end{aligned}\right.\]
\nt Since $\n_\k\le l$, $11l+7\le  7\m_\k \le 12l+7$ or equivalent to $l+\frac{4l}{7}+1 \le \m_\k \le l+\frac{5l}{7}+1$. Thus, we can obtain all the solutions if and only if we obtain all the ordered pairs $(\m_1,\m_2)$ (or $(\n_1, \n_2)$) in the required range.

\ms\nt Observe that $\m_\k = \frac{12l+7-\n_\k}{7}$, so that $12l+7-\n_\k\equiv 0\pmod{7}$, i.e., $\n_\k \equiv 5l\pmod{7}$ with $\n_1+\n_2+\n_3=l$. Thus, for each valid  $\n_\k$, a solution exists if and only if  $\n_\k\le l$. By symmetry, we may assume $\n_1\ge\n_2\ge \n_3$.  Let $\r$ be the least nonnegative residue of $\n_\k \equiv 5l$ modulo 7 for $\k=1,2,3$, i.e., $\r\in [0,6]$.
Thus, $\r=5,3,1,6,4,2,0$ when $l\equiv 1,2,3,4,5,6,0\pmod 7$, respectively. Therefore,
\[l=\n_1+\n_2+\n_3\ge 3\r=15,9,3,18,12,6,0, \mbox{ when }l\equiv 1,2,3,4,5,6,0\pmod 7.\]
Thus, there is no valid $\n_\k$ when $l=1,2,4,5,8,11$.  For $l=3,6,7,9,10, 12, 13$, we have
\[\begin{array}{c|l}
l & (\n_1, \n_2, \n_3)\\\hline
3 & (1,1,1)\\
6 & (2,2,2)
\end{array} \qquad \begin{array}{c|l}
l & (\n_1, \n_2, \n_3)\\\hline
7 & (7,0,0)\\
9 &  (3,3,3)\end{array}
\qquad \begin{array}{c|l}
l & (\n_1, \n_2, \n_3)\\\hline
10 & (8,1,1)\\
12 & (4, 4, 4)
\end{array}\qquad
\begin{array}{c|l}
l & (\n_1, \n_2, \n_3)\\\hline
13 & (9,2,2)
\end{array}
\]
Note that, each of the above solutions is unique.

\ms\nt Let $\n_\k=\r+7s_\k$, where $s_\k\ge 0$, $\k=1,2,3$. So $l=3\r+7(s_1+s_2+s_3)$.  Suppose $l=7t+\r_0\ge 14$, where $t\ge 2$ and $0\le \r_0\le 6$. Thus, we may choose 
\[\begin{array}{c|*{7}{c|}}
& \r_0=0 & \r_0=1 & \r_0=2 & \r_0=3 & \r_0=4 & \r_0=5 & \r_0=6\\\hline
(s_1, s_2, s_3)= & (t,0,0) & (t-2,0,0) & (t-1,0,0) & (t, 0, 0) & (t-2,0 , 0) & (t-1, 0, 0) & (t, 0, 0)
\end{array}\]

\nt Thus, there is always a solution for $(\m_\k, \n_\k)$ if and only if $l\in[3,\infty)\setminus\{4,5,8,11\}$.
If we want to find all non-isomorphic solutions, we need to solve the system completely.

\ms\nt Note that if the three components  are isomorphic, i.e., $\m=\m_\k$, $\n=\n_\k$, $\k=1,2,3$, then  $s = 6l+3 = 3(\m+\n)$. The only possible solutions are given by $l=3t\ge 3$. So that $\m = 5t+1$, $\n = t$ to get $3C((12t)^{[5t+1]}, (12t+2)^{[t]})$, $t\ge 1$.  In general, all the solutions  are given by 
  \[H_\k\cong C((4l)^{[ \frac{12l+7-\n_\k}{7}]}, (4l+2)^{[\n_\k]}),\] where  $l\in[3,\infty)\setminus\{4,5,8,11\}$, $\n_\k\le l$, $\n_\k = \r+7s_\k$,  $\r\equiv 5l\pmod{7}$, $\sum^3_{\k=1}\n_\k=l=3\r+7\sum^3_{\k=1} s_\k$.

\begin{example} If $l=30\equiv 2\pmod 7$, then $r=3$  and $s_1+s_2+s_3=3$. By symmetry, we must have $(s_1, s_2, s_3)=(3,0,0), (2,1,0), (1,1,1)$.  The corresponding $(\n_1, \n_2, \n_3)$ are $(24,3,3)$, $(17, 10,3)$ and $(10,10,10)$, respectively.
Thus, the only three solutions are $C(120^{[49]},122^{[24]})+2C(120^{[52]},122^{[3]})$, $C(120^{[50]},122^{[17]})+C(120^{[51]}, 122^{[10]})+C(120^{[52]}, 122^{[3]})$ and $3C(120^{[51]}, 122^{[10]})$. \rsq
\end{example}

\ms\nt {\bf Case A-2.}
In this case, $s=6l+1\ge 7$, $y=(6l+1)(4l+1)$ and $D=[1, 12l+2]$.
There are $3l$ induced paths of length $4l$, namely, $(i, 6l+1-i)_{4l}$, $1\le i\le 3l$; and $3l+1$ induced paths of length $4l-2$, namely, $(6l+j, 12l+3-j)_{4l-2}$, $1\le j\le 3l+1$.
Thus, the sum of two end-edge labels of an induced path of length $4l$ is $6l+1$; and that of an induced path of length $4l-2$ is $3(6l+1)$. Moreover, $y=(6l+1)(4l+1)$. Note that if $l=1$, there is a path of length 2 which will not give a 3-component solution. Thus, $l\ge 2$.

\ms\ms\nt Suppose $X_\k$ contains $\mu_\k$ induced paths of length $4l-2$ and $\nu_\k$ induced paths of length $4l$, where $\mu_\k\le 3l+1$, $\nu_\k\le 3l$. Thus, $S(X_\k)=3(6l+1)\mu_\k+(6l+1)\nu_\k=(6l+1)(3\mu_\k+\nu_\k)$. Under the requirement, $3\mu_\k+\nu_\k=4l+1$, $\mu_1+\mu_2+\mu_3=3l+1$ and $\nu_1+\nu_2+\nu_3=3l$.

\ms\ms\nt By solving the linear equations above, we have 
\[\left\{\begin{aligned}
\mu_3 & = 3l+1-\mu_1-\mu_2\\
\nu_1 & = 4l+1-3\mu_1\\
\nu_2 & = 4l+1-3\mu_2\\
\nu_3 & = 3\mu_1+3\mu_2-5l-2
\end{aligned}\right.\]
%

\ms\nt Observe that $\m_\k=\frac{4l+1-\n_\k}{3}$, so that $4l+1-\n_\k \equiv 0\pmod{3}$, i.e., $\n_\k \equiv l+1\pmod{3}$  with $\n_1+\n_2+\n_3=3l$.  Thus, for each valid  $\n_\k$, a solution exists if and only if  $0\le \n_\k\le l$. By symmetry, we may assume $\n_1\ge\n_2\ge \n_3$. Let $\r$ be the least nonnegative residue of $\n_\k=l+1$ modulo 3 for $\k=1,2,3$. Namely, $\r=2, 0, 1$ when $l\equiv 1,2,0\pmod 3$, respectively. Thus, $3l=\n_1+\n_2+\n_3\ge 6,0, 3$ when $l\equiv 1,2,0\pmod 3$, respectively. Therefore, there is always a solution for $\m_\k,\n_\k$  if and only if $l\ge 2$. Since there are no integer solutions for $s=6l+1=3(\m_\k+\n_\k)$, not all the components are isomorphic. In general,  all the solutions  are given by
\[H_\k \cong C((4l-2)^{[\frac{4l+1-\n_\k}{3}]}, (4l)^{[\n_\k]}),\]  
where $l\ge 2$, $\n_\k\le 3l$, $\n_\k = \r + 3s_\k$, $\r \equiv l+1\pmod{3}$, $\sum^3_{\k=1} \n_\k = 3l = 3\r + 3\sum^3_{\k=1}s_\k$. One family of possible solutions is given by $(\mu_1, \nu_1)=(\mu_2, \nu_2)=(l, l+1)$ and $(\mu_3, \nu_3)=(l+1, l-2)$, $l\ge 2$. Thus, a family of solutions is  $2C((4l-2)^{[l]}, (4l)^{[l+1]})+ C((4l-2)^{[l+1]}, (4l)^{[l-2]})$, $l\ge 2$.

\begin{example}
\nt For $l=2$, we get $\r=0$ and $(\n_1, \n_2, \n_3)=(6,0,0), (3,3,0)$. Thus, the only two solutions are $C(6,8^{[6]}) + 2C(6^{[3]})$ and $2C(6^{[2]}, 8^{[3]}) + C(6^{[3]})$.

\ms\nt For $l=3$, we get $\r=1$ and $(\n_1, \n_2, \n_3)=(7,1,1), (4,4,1)$. Thus, the only two solutions are $C(10^{[2]}, 12^{[7]})+2C(10^{[4]}, 12)$ and $2C(10^{[3]}, 12^{[4]})+C(10^{[4]}, 12)$.
%
\rsq
\end{example}

\ms\nt{\bf Case B-1.}
In this case, $s = 5l\ge 10$,  $y=20l^2-2l = 2l(10l-1)$ and $D=[1,10l-1]\cup\{10l^2-1\}$. There are $5l-1$ induced paths of length $4l-2$, namely, $(i, 10l-1-i)_{4l-2}$, $1\le i\le 5l-1$, with end-edge labels sum $10l-1$, and an induced path of length $2l-2$ with end-edge labels sum $10l^2+9l-1$. Note that if $l=2$, there is a path of length 2 which will not give a 3-component solution. Thus, $l\ge 3$.

\ms\nt Suppose $X_\k$ contains $\m_\k$ induced paths of length $2l-2$ and $\n_\k$ induced paths of length $4l-2$, where $0\le \m_\k\le 1$, $0\le \n_\k\le 5l-1$.  Thus, $S(X_\k) = (10l^2+9l-1)\m_\k+(10l-1)\n_\k = (10l-1)(l+1)\m_\k + (10l-1)\n_\k = (10l-1)((l+1)\m_\k+\n_\k)$. Under the requirement, $(l+1)\m_\k+\n_\k = 2l$, $\m_1+\m_2+\m_3=1$ and $\n_1+\n_2+\n_3=5l-1$. If $\m_\k=0$ (or 1), then $\n_\k = 2l$ (or $l-1$). Without loss of generality, we may assume $(\m_1,\m_2,\m_3) = (0,0,1)$ so that $(\n_1,\n_2,\n_3) = (2l,2l,l-1)$. Therefore, all the solutions are $2C((4l-2)^{[2l]})+C(2l-2,(4l-2)^{[l-1]})$, $l\ge 3$.

\ms\nt{\bf Case B-2.}
In this case, $s= 5l+3\ge 8$, $y=20l^2+22l+6 = (10l+6)(2l+1)$ and $D=[1,10l+5]\cup\{10l^2+11l+3\}$. There are $4l+2$ induced paths of length $4l$, namely, $(2l+j,10l+6-j)_{4l}$, $1\le j\le 4l+2$, with end-edge labels sum $12l+6$; $l$ induced paths of length $4l+2$, namely, $(i,2l+1-i)_{4l+2}$, $1\le i\le l$, with end-edge labels sum $2l+1$; and an induced path of length $2l$ with end-edge labels sum $10l^2+17l+6=(2l+1)(5l+6)$. Note that if $l=1$, there is an induced path of length 2 which will not give a 3-component solution. Thus, $l\ge 2$.

\ms\nt Without loss of generality, suppose $X_1$ contains $\m_1$ induced paths of length $4l$, $\n_1$ induced paths of length $4l+2$ and the only path of length $2l$, whereas for $\k=2,3$, $X_\k$ contains $\m_\k$ induced paths of length $4l$, and $\n_\k$ induced paths of length $4l+2$,  where $\m_\k \le 4l+2$, $\n_\k\le l$, $1\le \k\le 3$. Thus, $S(X_1) = 6(2l+1)\m_1 + (2l+1)\n_1 + (2l+1)(5l+6)$, $S(X_\k) =  6(2l+1)\m_\k + (2l+1)\n_\k = (2l+1)(6\m_\k+\n_\k)$ for $\k=2,3$. Under the requirement, $6\m_1+\n_1+5l+6 = 10l+6$, i.e., $6\m_1+\n_1 = 5l$, $6\m_\k+\n_\k=10l+6$ for $\k=2,3$, $\m_1+\m_2+\m_3=4l+2$, and $\n_1+\n_2+\n_3 = l$. Solving these linear equations, we have

\[\left\{\begin{aligned}
\mu_3 & = 4l+2-\mu_1-\mu_2\\
\nu_1 & = 5l-6\mu_1\\
\nu_2 & = 10l+6-6\mu_2\\
\nu_3 & = 6\mu_1+6\mu_2-14l-6
\end{aligned}\right.\]
\ms\nt Observe that $\m_1 = \frac{5l-\n_1}{6}$, $\m_\k = \frac{10l+6-\n_\k}{6}$, $\k=2,3$ so that $\n_1 \equiv 5l\pmod{6}$ and $\n_\k \equiv 4l\pmod{6}$, $\k=2,3$, satisfying $\n_1+\n_2+\n_3=l$. Thus, for each valid $\n_1\equiv 5l\pmod{6}$,  $\n_\k \equiv 4l\pmod{6}$, $\k=2,3$, a solution exists if and only if $0\le \n_1, \n_\k\le l$.  Let $\r_1$ and $\r_\k$ $(\k=2,3)$ be the least nonnegative residue of $5l$ and $4l$ modulo 6, respectively. Thus
\[\r_1=
\begin{cases}
5 & \mbox{ if } l\equiv 1\pmod 6,\\
4 & \mbox{ if } l\equiv 2\pmod 6,\\
3 & \mbox{ if } l\equiv 3\pmod 6,\\
2 & \mbox{ if } l\equiv 4\pmod 6,\\
1 & \mbox{ if } l\equiv 5\pmod 6,\\
0 & \mbox{ if } l\equiv 0\pmod 6;
\end{cases}
\quad \r_2=\r_3 =
\begin{cases}
4 & \mbox{ if } l\equiv 1\pmod 6,\\
2 & \mbox{ if } l\equiv 2\pmod 6,\\
0 & \mbox{ if } l\equiv 3\pmod 6,\\
4 & \mbox{ if } l\equiv 4\pmod 6,\\
2 & \mbox{ if } l\equiv 5\pmod 6,\\
0 & \mbox{ if } l\equiv 0\pmod 6.
\end{cases}
\]
Thus, this implies that
\[l=\n_1+\n_2+\n_3\ge \r_1+\r_2+\r_3= \begin{cases}
13 & \mbox{ if } l\equiv 1\pmod 6,\\
8 & \mbox{ if } l\equiv 2\pmod 6,\\
3 & \mbox{ if } l\equiv 3\pmod 6,\\
10 & \mbox{ if } l\equiv 4\pmod 6,\\
5 & \mbox{ if } l\equiv 5\pmod 6,\\
0 & \mbox{ if } l\equiv 0\pmod 6.
\end{cases}\]
Thus, when $l=2, 4, 7$, there is no valid $\n_\k$. Following are all solutions (without loss of generality, we assume $\n_2\ge \n_3$) for  $l\in [3,12]\setminus\{4,7\}$.
\[\begin{array}{c|l}
l & (\n_1, \n_2, \n_3)\\\hline
3 &  (3,0,0)\\
5 &  (1,2,2)\\
6 &  (6,0,0), (0,6,0)\\
\end{array} \qquad \begin{array}{c|l}
l & (\n_1, \n_2, \n_3)\\\hline
8 &  (4,2,2),\\
9 &  (9,0,0), (3, 6, 0)\\
10 & (2,4,4)
\end{array}
 \qquad \begin{array}{c|l}
l & (\n_1, \n_2, \n_3)\\\hline
11 &   (7, 2, 2), (1,8,2)\\
12 & (12, 0,0), (6, 6,0), (0,6,6)
\end{array}
\]
Now $\n_\k=\r_\k +6s_\k$ for some $s_\k\ge 0$, $\k=1,2,3$. So $l=\r_1+\r_2+\r_3+6(s_1+s_2+s_3)$. Suppose $l=6t+\r$ where $t\ge 2$ and $0\le \r\le 5$. Therefore, we may choose
\[\begin{array}{c|*{6}{c|}}
& \r=0 & \r=1 & \r=2 & \r=3 & \r=4 & \r=5\\\hline
(s_1, s_2, s_3)= & (t,0,0) & (0, t-2,0) & (0, t-1,0) & (0,t, 0) & (0, t-1, 0) & (0, t, 0)
\end{array} .\]
Thus, there is always a solution for $(\m_\k, \n_\k)$ if and only if $l\in[3,\infty)\setminus\{4,7\}$. Moreover, not all the components are isomorphic. In general, all the solutions are given by
\[C(2l, (4l)^{[\frac{5l-\n_1}{6}]}, (4l+2)^{[\n_1]}) + H_2+H_3, H_\k\cong C((4l)^{[\frac{10l+6-\n_\k}{6}]}, (4l+2)^{[\n_\k]}), \k=2,3  \] 
where $l\ge 2$, $\n_\k\le l$, $\n_\k = \r_\k + 6s_\k$, $\r_1\equiv 5l\pmod{6}$, $\r_2,\r_3\equiv 4l\pmod{6}$, $\sum^3_{\k=1}\n_\k = l = \sum^3_{\k=1} (\r_\k + 6s_\k)$. Let $l=3t\ge 3$. One possible family of solutions is given by $(\n_1,\n_2,\n_3) = (3t, 0,0)$ and $(\m_1,\m_2,\m_3) = (2t,5t+1,5t+1)$. Therefore, the corresponding family of solutions is $C((12t)^{[2t]}, (12t+2)^{[3t]}) + 2C((12t)^{[5t+1]}), t\ge 1$.

\begin{example} For $l=9$, we get $\r_1=3$, $\r_2=\r_3=0$ with corresponding $(\n_1,\n_2,\n_3) = (9,0,0), (3,6,0)$. Thus, the only two  solutions are $C(18, 36^{[6]}, 38^{[9]}) + 2C(36^{[16]})$ and $C(18, 36^{[7]}, 38^{[3]}) + C(36^{[15]}, 38^{[6]})+C(36^{[16]})$.  \rsq
\end{example}


\ms\nt {\bf Case C.}  In this case, $s=4l+3\ge 7$, $y=(8l+5)(2l+1)$ and $D=[1,8l+4]\cup\{\g_1,\g_2 \mid \g_1+\g_2=y \mbox{ and } \g_1,\g_2 \mbox{  not adjacent to any end-edge labels}\}$. There are $3l+2$ induced paths of length $4l$, namely, $(2l+j,8l+5-j)_{4l}$, $1\le j\le 3l+2$, with end-edge labels sum $10l+5 = 5(2l+1)$, and $l$ induced paths of length $4l+2$, namely, $(i,2l+1-i)_{4l+2}$, $1\le i\le l$, with end-edge labels sum $2l+1$. Moreover, one of the paths must be split into two paths of even length at least 4. Thus, $l\ge 2$. We have two subcases.

\ms\nt {\bf Subcase (I).} Consider the induced paths of length $4l$ with consecutive edge labels  $A_{2l}(2l+j; 8l+4) \diamond A_{2l}(16l^2+8l+1 - j; -8l-4)$, $1\le j\le 3l+2$. Suppose this path is split into two paths of length $2r$ and $4l-2r$ for $2\le r\le 2l-2$, namely, $(2l+j,16l^2+8l+1-j-(r-1)(4(2l+1)))_{2r}$ with end-edge labels sum, say, $\sg_1 = (2l+j)+(16l^2+8l+1-j-(r-1)(4(2l+1)) = 16l^2+10l+1+4(2l+1)-4r(2l+1) = (8l+5-4r)(2l+1)$, and $((2l+j)+4r(2l+1), 8l+5-j)_{4l-2r}$ with end-edge labels sum, say, $\sg_2 = 10l+5+4r(2l+1) = (5+4r)(2l+1)$. Note that $\{(8l+5-4r, 5+4r) \mid 2\le r\le l\} = \{(5+4r,8l+5-4r)\mid l\le r\le 2l-2\}$. By symmetry, we need only consider $2\le r\le l$. Without loss of generality, we may just take $j=1$ to get the paths $(2l+1,(8l+4-4r)(2l+1))_{2r}$ and $((4r+1)(2l+1), 8l+4)_{4l-2r}$. Since the induced paths of length $2r$ and $4l-2r$ must not belong to the same component of each solution, all the solutions are $C(2r, (4l)^{[t_1]}, (4l+2)^{[4r-5t_1]}) + C(4l-2r, (4l)^{[t_2]}, (4l+2)^{[8l-4r-5t_2]}) + C((4l)^{[3l+1-t_1-t_2]}, (4l+2)^{[5t_1+5t_2-7l]})$, where $2\le r\le l$, $4r-l\le 5t_1\le 4r$,  $7l-4r\le 5t_2 \le 8l-4r$, $7l\le 5(t_1+t_2)$, and not all the components are isomorphic.

\ms\nt Note that, if $r=l$, then the solutions are of the form \[C(2r, (4l)^{[t_1]}, (4l+2)^{[s_1]})+C(2r, (4l)^{[t_2]}, (4l+2)^{[s_2]})+C((4l)^{[3l+1-t_1-t_2]}, (4l+2)^{[l-s_1-s_2]}),\] for some suitable $s_1, s_2$. By symmetry, we may assume $t_1\le t_2$.

\begin{example} Let $l=2,3$. One can easily verify that there is no solutions.

\ms\nt Let $l=4$, we have  $2\le r\le 4$, $4r-4\le 5t_1\le 4r$, $28-4r\le 5t_2\le 32-4r$ and $6\le 6(t_1+t_2)$.
If $r=2$, then $t_1 = 1$ and $t_2 =4$, contradicting $t_1+t_2\ge 6$. So there is no solution. If $r=3$, then  $t_1=2$ and $t_2=4$. The solution is $C(6, 16^{[2]}, 18^{[2]}) + C(10, 16^{[4]}) + C(16^{[7]}, 18^{[2]})$. If $r=4$, then  $3\le t_1 \le t_2 \le 3$.  So, $t_1=t_2=3$.  The solution is $C(8, 16^{[3]}, 18) + C(8, 16^{[3]}, 18) + C(16^{[7]}, 18^{[2]})$.

\ms\nt Let $l=5$, we have $2\le r\le 5$,  $4r-5\le 5t_1\le 4r$, $35-4r\le 5t_2\le 40-4r$ and $7\le t_1+t_2$.
If $r=2$, then $t_1=1$ and $t_2=6$. The solution is $C(4,20, 22^{[3]}) + C(16, 20^{[6]}, 22^{[2]}) + C(20^{[9]})$. If $r=3$,  then  $t_1=2$ and $t_2=5$. The solution is $C(6,20^{[2]}, 22^{[2]}) + C(14, 20^{[5]}, 22^{[3]}) + C(20^{[9]})$. If $r=4$, then $t_1=3$ and $t_2=4$. The solution is $C(8,20^{[3]}, 22^{[1]}) + C(12,20^{[4]}, 22^{[4]}) + C(20^{[9]})$. If $r=5$, then  $3\le t_1\le t_2\le 4$ and $t_1+t_2\ge 7$. So, $t_1=3$ and $t_2=4$,  or $t_1=t_2=4$. The solutions are $C(10, 20^{[3]}, 22^{[5]}) + C(10, 20^{[4]}) + C(20^{[9]})$,  and $2 C(10, 20^{[4]}) + C(20^{[8]}, 22^{[5]})$. \rsq
\end{example}


\ms\nt{\bf Subcase (II).} Consider the induced paths of length $4l+2$ with consecutive edge labels $A_{2l+1}(i; 8l+4)\diamond A_{2l+1}(16l^2+10l+1 - i; -8l-4)$, $1\le i\le l$. By the same argument as above, suppose the induced path $(1, 2l+1)_{4l+2}$ is split into two paths to get $(1,16l^2+10l-4(r-1)(2l+1))_{2r}= (1,16l^2+18l+4-4r(2l+1))_{2r}$ with end-edge labels sum $\sg_1=(8l+5-4r)(2l+1)$, and $(1+4r(2l+1), 2l)_{4l+2-2r}$ with end-edge labels sum $\sg_2 = (4r+1)(2l+1)$ for $r\le l$. Thus, all the solutions are
\begin{align*} &\quad C(2r,(4l)^{[t_1]}, (4l+2)^{[4r-5t_1]}) + C(4l+2-2r, (4l)^{[t_2]},(4l+2)^{[8l+4-4r-5t_2]}) \\ & + C((4l)^{[3l+2-t_1-t_2]}, (4l+2)^{[5t_1+5t_2-7l-5]}), \end{align*} where $r\le l$,  $4r-l +1\le 5t_1\le 4r$, $7l+5-4r \le 5t_2 \le 8l+4-4r$, $7l+5\le 5(t_1+t_2)$, and not all the components are isomorphic.

\begin{example}   Let $l=2,3$. One can easily verify that there is no solutions.

\ms\nt Let $l=4$, we have  $2\le r \le 4$ and $t_1+t_2 \ge 7$. If $r=2$, then  $t_1=1$ and $t_2= 5$. There is no solution. If $r=3$, then $t_2$ does not exist. There is no solution. If $r=4$, then $t_1 =3$, $t_2 =4$. The  solution is $C(8,16^{[3]},18) + C(10,16^{[4]}) + C(16^{[7]},18^{[2]})$.

\ms\nt Let $l=5$, we have $2\le r\le 5$ and $t_1+t_2\ge 8$. If $r=2$, then $t_1=1$, $t_2=7$. The solution is $C(4,20,22^{[3]}) + C(18,20^{[7]},22) + C(20^{[9]})$. If $r=3$, then $t_1=2$, $t_2=6$, the solution is $C(6,20^{[2]},22^{[2]}) + C(16,20^{[6]},22^{[2]}) + C(20^{[9]})$. If $r=4$, then $t_1=3$, $t_2=5$. The solution is $C(8,20^{[3]}, 22) +C(14,20^{[5]},22^{[5]}) + C(20^{[9]})$. If $r=5$, then $t_1=t_2=4$. The solution is $C(10,20^{[4]}) + C(12,20^{[4]}, 22^{[4]}) + C(20^{[9]})$. \rsq
\end{example}

\ms\nt{\bf Case D-1.}
In this case, $s=3l\ge 6$, $y=2(3l-1)(2l-1)$ and $D=[1,3(2l-1)]\cup \{y/2\}$. There are $l-1$ induced paths of length $4l-2$ with edge labels $A_{2l-1}(i; 6l-3)\diamond A_{2l-1}((2l-1)(6l-5)-i; -6l+3) = (i,2l-1-i)_{4l-2}$ for $1\le i\le l-1$ with end-edge labels sum $2l-1$; $2l-1$ induced paths of length $4l-4$ with edge labels $A_{2l-2}(2l-2+j; 6l-3)\diamond A_{2l-2}((2l-1)(6l-5)-2l+2-j; -6l+3) = (2l-2+j, 6l-2-j)_{4l-4}$ for $1\le j\le 2l-1$ with end-edge labels sum $8l-4 = 4(2l-1)$; and an induced path of length $2l-2$ with edge labels $A_{l-1}(4l-2; 6l-3)\diamond A_{l-1}((6l-7)(2l-1); -6l+3) = (4l-2,(3l-1)(2l-1))_{2l-2}$ with end-edge labels sum $(3l+1)(2l-1)$. Moreover, one of the paths must be split into two paths of even length at least 4. We have three subcases.

\ms\nt{\bf Subcase (I).} Suppose the induced path of length $2l-2$, namely $(4l-2,(3l-1)(2l-1))_{2l-2}$, is split, then $l\ge 5$. For $r\le (l-1)/2$, split it  into two paths of length $2r$ and $2l-2-2r$, namely, $(4l-2, (6l-3r-4)(2l-1))_{2r}$ with end-edge labels sum $\sg_1=(6l-3r-2)(2l-1)$, and $((3r+2)(2l-1), (3l-1)(2l-1))_{2l-2-2r}$ with end-edge labels sum $\sg_2 = (3l+3r+1)(2l-1)$. Since these two induced paths must not belong to the same component of each solution and every component that contains one of them must contain at least one other induced path, all the solutions are
\begin{align*} &\quad C(2r, (4l-4)^{[t_1]}, (4l-2)^{[3r-4t_1]}) + C(2l-2-2r, (4l-4)^{[t_2]}, (4l-2)^{[3l-3r-3-4t_2]})\\ & + C((4l-4)^{[2l-1-t_1-t_2]}, (4l-2)^{[4t_1+4t_2+2-2l]}),\end{align*}
where $r\le (l-1)/2$, $3r-l+1 \le 4t_1 \le 3r$, $2l-3r-2 \le 4t_2\le 3l-3r-3$, $2l-2 \le 4(t_1+t_2)$, and not all the components are isomorphic. By the same argument as in Case C, if $2r=l-1$, then we may assume $t_1\le t_2$.

\begin{example} Let $l=5$, we have  $r=2=(l-1)/2$. Thus $2\le 4t_1\le 4t_2\le 6$. So $t_1 = t_2=1$. The solution is $2C(4,16,18^{[2]})+C(16^{[7]})$.  Let $l=6$, we have $r=2$, $t_1+t_2 = 3$ with $t_1=1$, $t_2=2$. The solution is $C(4,20,22^{[2]}) + C(6,20^{[2]},22)+C(20^{[8]}, 22^{[2]})$.  \rsq
\end{example}

\ms\nt{\bf Subcase (II).} Suppose one induced path of length $4l-4$ is split, then $l\ge 3$. For $r\le l-1$, by the same argument as in Case C, we split the induced path $(2l-1, 6l-3)_{4l-4}$ to get $(2l-1, (6l-3-3r)(2l-1))_{2r}$ with end-edge labels sum $\sg_1=(6l-2-3r)(2l-1)$ and $((3r+1)(2l-1), 6l-3)_{4l-4-2r}$ with end-edge labels sum $\sg_2=(3r+4)(2l-1)$. Note that the induced path $(4l-2,((3l-1)(2l-1))_{2l-2}$ must belong to exactly one of the components. Since every component that contains $(2l-1, (6l-3-3r)(2l-1))_{2r}$ and $((3r+1)(2l-1), 6l-3)_{4l-4-2r}$ must also contain at least one other induced path, all the solutions are one of the following three types:
\begin{enumerate}[(1)]
\item  Type $S_1: $ $C(2r, 2l-2, (4l-4)^{[t_1]}, (4l-2)^{[3r-4t_1-3l-1]}) + C(4l-4-2r, (4l-4)^{[t_2]}, (4l-2)^{[6l-6-3r-4t_2]}) + C((4l-4)^{[2l-2-t_1-t_2]}, (4l-2)^{[4t_1+4t_2+6-2l]})$, where $r\le l-1$, $3r-4l \le 4t_1 \le 3r-3l-1$, $5l-5-3r \le 4t_2 \le 6l-6-3r$, $2l - 6\le 4(t_1+t_2)$, and not all the components are isomorphic.
\item  Type  $S_2: $ $C(2r, (4l-4)^{[t_1]}, (4l-2)^{[3r-4t_1]}) + C(4l-4-2r, 2l-2, (4l-4)^{[t_2]}, (4l-2)^{[3l-7-3r-4t_2]}) + C((4l-4)^{[2l-2-t_1-t_2]}, (4l-2)^{[4t_1+4t_2+6-2l]})$, where $r\le l-1$, $3r-l+1\le 4t_1\le 3r$, $2l-6-3r \le 4t_2\le 3l-7-3r$, $2l-6 \le 4(t_1+t_2)$, and not all the components are isomorphic.
\item Type $S_3: $ $C(2r, (4l-4)^{[t_1]}, (4l-2)^{[3r-4t_1]}) + C(4l-4-2r, (4l-4)^{[t_2]}, (4l-2)^{[6l-6-3r-4t_2]}) +C(2l-2, (4l-4)^{[2l-2-t_1-t_2]}, (4l-2)^{[4t_1+4t_2+5-5l]})$, where $r\le l-1$, $3r-l+1\le 4t_1\le 3r$, $5l-5-3r  \le 4t_2\le 6l-6-3r$,  $5l-5 \le 4(t_1+t_2)$. By the same argument as in Case C, if $r=l-1$, then we may assume $t_1\le t_2$. Note that the three components are isomorphic if and only if $2r=4l-4-2r=2l-2$ and $t_1=t_2=2l-2-t_1-t_2$, i.e., $r=l-1$ and $t_1=t_2=\frac{2l-2}{3}$. The solutions are $3C(2l-2, (4l-4)^{[(2l-2)/3]}, (4l-2)^{[(l-1)/3]})$, $l \equiv 1\pmod{3}$.
\end{enumerate}

\begin{example} Let $l=3$, we have $r=2$ with $t_1$  does not exist for Types $S_1$ and $S_2$, while Type $S_3$ gives $t_1+t_2=3$, $t_1=t_2=1$, a contradiction.

\ms\nt Let $l=4$, we have $r=2,3$ and both Types $S_1, S_2$ give  $t_1, t_2<0$, respectively, a contradiction. Now, Type $S_3$ gives $t_1+t_2=4,5$.
 \begin{enumerate}[(a)]
\item If $r=2$, then $t_1=1$ and $t_2 = 3$, The solution is $C(4,12, 14^{[2]}) + C(8, 12^{[3]}) + C(6, 12^{[2]}, 14)$.
\item  If $r=3 = l-1$, then $t_1=t_2=2$. The solution is $3C(6, 12^{[2]}, 14)$. \rsq
\end{enumerate}
\end{example}

\ms\nt{\bf Subcase (III).} Suppose one induced path of length $4l-2$ is split, then $l\ge 3$. For $r\le l-1$, by  the same argument as in Case C, we split the induced path $(1,2l-2)_{4l-2}$ to get $(1, 12l^2-10l+1-3r(2l-1))_{2r}$ with end-edge labels sum $\sg_1= (6l-2-3r)(2l-1)$ and $(1+3r(2l-1), 2l-2)_{4l-2-2r}$ with end-edge labels sum $\sg_2 = (3r+1)(2l-1)$. Since every component that contains of $(1, 12l^2-10l+1-3r(2l-1))_{2r}$ or $(1+3r(2l-1), 2l-2)_{4l-2-2r}$ must have at least one other induced path, all the solutions are one of the following three types:
\begin{enumerate}[(1)]
\item Type $S_1: $ $C(2r, 2l-2, (4l-4)^{[t_1]}, (4l-2)^{[3r-3l-1-4t_1]}) + C(4l-2-2r, (4l-4)^{[t_2]}, (4l-2)^{[6l-3-3r-4t_2]}) + C((4l-4)^{[2l-1-t_1-t_2]}, (4l-2)^{[4t_1+4t_2+1-5l]})$, where $r\le l-1$,  $3r-4l+1 \le 4t_1\le 3r-3l-1$,  $5l-1-3r\le 4t_2\le 6l-3-3r$, $5l-1\le 4(t_1+t_2) \le 6l-3$.
\item Type $S_2: $ $C(2r,  (4l-4)^{[t_1]}, (4l-2)^{[3r-4t_1]}) + C(4l-2-2r, 2l-2, (4l-4)^{[t_2]}, (4l-2)^{[3l-4-3r-4t_2]}) + C((4l-4)^{[2l-1-t_1-t_2]}, (4l-2)^{[4t_1+4t_2+1-5l]})$, where $r\le l-1$, $3r-l+2 \le 4t_1\le 3r$,  $2l-2-3r \le 4t_2\le 3l-3r-4$, $5l-1 \le 4(t_1+t_2)$.
\item Type $S_3: $ $C(2r,  (4l-4)^{[t_1]}, (4l-2)^{[3r-4t_1]}) + C(4l-2-2r, (4l-4)^{[t_2]}, (4l-2)^{[6l-3-3r-4t_2]}) + C(2l-2, (4l-4)^{[2l-1-t_1-t_2]}, (4l-2)^{[4t_1+4t_2+1-5l]})$, where $r\le l-1$, $3r-l+2 \le 4t_1\le 3r$,  $5l-1-3r \le 4t_2\le  6l-3-3r$, $5l-1 \le 4(t_1+t_2)$.
\end{enumerate}
\nt Not all the components are isomorphic.

\begin{example}  Let $l=3$, we have $r=2$ with $t_1$ does not exist for Types $S_1, S_2$ and $S_3$. There is no solution. Let $l=4$, we have $r=2,3$,
 \begin{enumerate}[(a)]
\item If $r=2$, then $t_2$ does not exist for Types $S_1$ and $S_3$, while $t_1+t_2$ is not in the given range for Type $S_2$. There is no solution.
\item If $r=3$, then $t_2$ does not exist for Types $S_1$ and $S_2$. Type $S_3$ gives $t_1=2$, $t_2=3$. The solution is $C(6,12^{[2]}, 14) + C(8, 12^{[3]}) + C(6, 12^{[2]}, 14)$.
\end{enumerate}
\end{example}

\ms\nt {\bf Case D-2.}
In this case, $s=3l+1\ge 7$, $y=2l(6l-1)$ and $D=[1,6l-1]\cup \{l(6l-1)\}$. There are $3l-1$ induced paths of length $4l-2$ with edge labels $A_{2l-1}(i; 6l-1) \diamond A_{2l-1}((2l-1)(6l-1)-i; -6l+1) = (i,6l-1-i)_{4l-2}$ for $1\le i\le 3l-1$ with end-edge labels sum $6l-1$, and an induced path of length $2l-2$ with edge labels $A_{l-1}(6l-1; 6l-1)\diamond A_{l-1}((2l-2)(6l-1); -6l+1) = (6l-1, (l-1)(6l-1))_{2l-2}$ with end-edge labels sum $l(6l-1)$. Moreover, one of the paths must be split into two paths of even length at least 4. We have two subcases.

\ms\nt{\bf Subcase (I).} Suppose the induced path of length $2l-2$ is split, then $l\ge 5$. For $r\le (l-1)/2$, split it into two paths of length  $2r$ and $2l-2-2r$, namely $(6l-1, r(6l-1))_{2r}$ with end-edge labels sum $\sg_1 = (r+1)(6l-1)$, and $((2l-r)(6l-1),(l-1)(6l-1))_{2l-2-2r}$ with end-edge labels sum $\sg_2 = (3l-r-1)(6l-1)$. Since these two induced paths must not belong to the same component of each solution, all the possible 3-component solutions set is $\{C(2r, (4l-2)^{[2l-r-1]}) + C((2l-2-2r, (4l-2)^{[r+1-l]}) + C((4l-2)^{[2l-1]})\}$. Since component $C((2l-2-2r, ((4l-2)^{[r+1-l]})$ must have at least one induced cycle of length $4l-2$, we have $r\ge l$, contradicting $r\le (l-1)/2$. Thus, there is no solution by splitting this path.

\ms\nt{\bf Subcase (II).} Suppose one induced path of length $4l-2$ is split, then $l\ge 3$. For $r \le l-1$, by the same argument as in Case C, we split the induced path $(1,6l-2)_{4l-2}$ to get $(1, (2l-1)(6l-1)-1-(r-1)(6l-1))_{2r}$ with end-edge labels sum $\sg_1 = (2l-r)(6l-1)$ and $(1+r(6l-1), 6l-2)_{4l-2-2r}$ with end-edge labels sum $\sg_2 = (r+1)(6l-1)$.  All the solutions must be of type
\begin{align*} & C(2r,(2l-2),(4l-2)^{[r-l]}) + C(4l-2-2r, (4l-2)^{[2l-r-1]}) + C((4l-2)^{[2l-1]}),\\
\mbox{ or } & C(2r, 4l-2)^{[r]} + C(4l-2-2r, 2l-2, (4l-2)^{[l-r-1]}) + C((4l-2)^{[2l-1]}),\\
\mbox{ or } & C(2r, (4l-2)^{[r]}) + C(4l-2-2r, (4l-2)^{[2l-r-1]}) + C(2l-2, (4l-2)^{[l-1]}).\end{align*}
Note that the non-degree 2 vertex of $C((4l-2)^{[2l-1]})$ (and $C(2l-2, (4l-2)^{[l-1]})$) has induced vertex label $(2l-1)(6l-1) < y$, a contradiction. Thus, there is no solution by splitting this path.

\ms\nt{\bf Case E.} In this case, $s=2l+1\ge 5$, $y=(2l-1)(4l-1)$ and $D =[1, 2(2l-1)]\cup\{\gamma_1, \gamma_2, \gamma_3,\gamma_4\}$, where $\g_1+\g_3 = \g_2+\g_4=y$. There are $l$ induced paths of length $4l-4$, namely $A_{2l-2}(2l-2+j;-4l+2)\diamond A_{2l-2}(8l^2-12l+5-j;-4l+2) = (2l-2+j, 4l-1-j)_{4l-4}$ for $1\le j\le l$, with end-edge labels sum $3(2l-1)$, and $l-1$ induced path(s) of length $4l-2$, namely $A_{2l-1}(i,4l-2)\diamond A_{2l-1}(8l^2-10l+3-i;-4l+2) = (i,2l-1-i)_{4l-2}$ for $1\le i\le l-1$, with end-edge labels sum $(2l-1)$. Moreover, two of these paths are split into two paths of even length at least 4, respectively, or else one of these paths is split into three paths of even length at least 4.

\ms\nt{\bf Subcase (I).} Suppose we split two induced paths into two distinct paths respectively. We have the following three subcases.
\begin{enumerate}[(a)]
\item Both the induced paths are of length $4l-4$. For $r_1, r_2\le l-1$, by the same argument as in Case C, split the induced path $(2l-1,4l-2)_{4l-4}$ (respectively, path $(2l,4l-3)_{4l-4}$) to get $Q_1=(2l-1, 2(2l-1)(2l-1-r_1))_{2r_1}$ with end-edge labels sum $\sg_1 = (2l-1)(4l-1-2r_1)$ and $Q_3=((2l-1)(2r_1+1), 4l-2)_{4l-4-2r_1}$ with end-edge labels sum $\sg_2 = (2l-1)(2r_1+3)$ (respectively, to get $Q_2 = (2l, 2(2l-1)(2l-1-r_2)-1)_{2r_2}$ with end-edge labels sum $\sg'_1 =(2l-1)(4l-1-2r_2)$ and $Q_4=((2l-1)(2r_2+1)+1, 4l-3)_{4l-4-2r_2}$ with end-edge labels sum $\sg'_2 = (2l-1)(2r_2+3)$.

    {\bf Claim: } In each of $\{Q_1, Q_3\}$, $\{Q_2, Q_4\}$, $\{Q_1, Q_2\}$, both the induced paths are not in the same component.

    {\bf Proof of the claim: } Since $\sg_1 + \sg_2 = \sg'_1 + \sg'_2 =(2l-1)(4l+2)>y$ and $\sg_1+\sg'_1\ge (2l-1)(4l+2)>y$, we have the claim. \rsq

    If both $Q_1,Q_4$ (and $Q_2,Q_3$) are in the same component, then  $\sg_1+\sg'_2 =(2l-1)(4l+2-2r_1+2r_2) \le (2l-1)(4l-1)$ so that $2r_1\ge2r_2+3$ (and $\sg'_1+\sg_2\le y$ gives $2r_2\ge 2r_1+3$). Therefore, $2r_1\ge 2r_1+6$, a contradiction.
Thus, $Q_i$, $1\le i\le 4$, are in exactly three components. We have the following three subcases.
\begin{enumerate}[(1)]
 \item  If $Q_1$, $Q_2$ are in distinct components respectively and $Q_3,Q_4$ are in the third component, then $\sg_2+\sg'_2 = (2l-1)(2r_1+2r_2+6)\le y$ gives $8\le 2r_1+2r_2\le 4l-7$ so that $l\ge 4$. All the solutions are
\begin{align*}&\quad\ C(2r_1, (4l-4)^{[t_1]}, (4l-2)^{[2r_1-3t_1]}) + C(2r_2,(4l-4)^{[t_2]}, (4l-2)^{[2r_2-3t_2]})\\
    & + C(4l-4-2r_1, 4l-4-2r_2, (4l-4)^{[l-2-t_1-t_2]}, (4l-2)^{[l+3t_1+3t_2-2r_1-2r_2-1]}),\end{align*}
where $r_1, r_2\le l-1$, $2r_1-l+1 \le 3t_1 \le 2r_1$,  $2r_2-l+1 \le 3t_2 \le 2r_2$, $t_1+t_2\le l-2$, $2r_1+2r_2-l+1\le 3(t_1+t_2) \le 2(r_1+r_2)\le 4l-7$, and not all the components are isomorphic. By symmetry, we may assume if $r_1=r_2$, then $t_1\le t_2$, and if $t_1 = t_2$, then $r_1\le r_2$.

\item  If $Q_1,Q_4$ are in distinct components respectively and $Q_2, Q_3$ are in the third component, then $\sg'_1 +\sg_2\le y$ gives $4\le 2r_1\le 2r_2 -3 \le 2l-5$ so that $r_2\ge 4$, $l\ge 5$. All the solutions are
   \begin{align*}&\quad\ C(2r_1, (4l-4)^{[t_1]}, (4l-2)^{[2r_1-3t_1]}) + C(4l-4-2r_2, (4l-4)^{[t_2]}, (4l-2)^{[4l-2r_2-3t_2-4]}) \\ & +
   C(2r_2, 4l-4-2r_1, (4l-4)^{[l-2-t_1-t_2]}, (4l-2)^{[3t_1+3t_2+2r_2+3-2r_1-3l]}), \end{align*}
   where $4\le 2r_1 \le 2r_2-3\le 2l-5$, $2r_1-l+1\le 3t_1 \le 2r_1$, $3l-2r_2-3\le 3t_2\le 4l-2r_2-4$, $2r_1+3l-2r_2-3 \le 3(t_1+t_2) \le 2r_1-2r_2+4l-4 \le 4l-7$, and not all the components are isomorphic. By  symmetry, we may assume if $r_1+r_2 = 2l-2$, then $t_1\le t_2$, and the first two components are isomorphic when equality holds.

\item  If $Q_2,Q_3$ are in distinct components respectively and $Q_1,Q_4$ are in the third component, then we may swap the notation $r_1,r_2$ so that $Q_1,Q_3$, and $Q_2,Q_4$ are swapped. Therefore, the solutions are isomorphic to those in Subcase (2) above.
  \end{enumerate}
\end{enumerate}

\begin{example} Consider (1) above.   For $l=4$, we get $r_1=r_2=2$, $t_1=t_2=1$. The solution is $2C(4,12, 14)+C(8, 8, 14)$.

\ms\nt Consider (2) above.  For $l=5$, we get $r_2=4$, $r_1=2$, $t_1 = 0$, $t_2 = 1$. The solution is $C(8, 12, 18) + C(4, 16, 18) + C(8, 16^{[2]}, 18^{[2]})$.

\ms\nt For $l=6$, we get $4\le 2r_1 \le 2r_2-3\le 7$ so that $2\le r_1\le 3$, $4\le r_2\le 5$ and $t_1+t_2\le 4$.
\begin{enumerate}[(i)]
\item Take $(r_1,r_2) = (2,4)$, we get $t_1=0$, $t_2=0,1$. So, $(r_1,r_2,t_1,t_2) = (2,4,0,0), (2,4,0,1)$. The solutions are $C(8,16,22) + C(4,22^{[4]}) + C(12,20^{[4]})$ and $C(8,16,22) + C(4,20,22) + C(12,20^{[3]},22^{[3]})$, respectively.
\item Take $(r_1,r_2) = (2,5)$, we get $t_1,t_2 = 0,1$ and $t_1+t_2=1, 2$.  So, $(r_1,r_2,t_1,t_2) = (2,5,0,1)$, $(2,5,1,0)$, $(2,5,1,1)$. The solutions are $C(10,16,22^{[3]}) + C(4,20,22) + C(10,20^{[3]},22)$,  and $C(10,16,20) + C(4,20,22) + C(10,20^{[2]},22^{[4]})$, respectively.
\item Take $(r_1,r_2) = (3,5)$, we get $t_1 = 0$, $t_2=1,2$. So, $(r_1,r_2,t_1,t_2) = (3,5,0,1)$, $(3,5,0,2)$. The solutions are $C(10,14,22) + C(6,20,22^{[3]}) + C(10,20^{[3]},22)$ and $C(10,14,22) + C(6,20^{[2]}) + C(10,20^{[2]},22^{[4]})$, respectively. \rsq
\end{enumerate}
\end{example}

\begin{enumerate}[(a)]
\item[(b)] One induced path is of length $4l-4$ and another is of length $4l-2$. In this case, $l\ge 3$. By the same argument as in Subcase (a) above, for $r_1 \le l-1$, we split the induced path $(2l-1,4l-2)_{4l-4}$ to get $Q_1=(2l-1, 2(2l-1)(2l-1-r_1))_{2r_1}$ with end-edge labels sum $\sg_1 = (2l-1)(4l-1-2r_1)$ and $Q_3=((2l-1)(2r_1+1), 4l-2)_{4l-4-2r_1}$ with end-edge labels sum $\sg_2 = (2l-1)(2r_1+3)$. For $r_2 \le l-1$, we split the induced path $(1,2l-2)_{4l-2}$ to get $Q_2 = (1,2l(4l-3)-r_2(4l-2))_{2r_2}$ with end-edge labels sum $\sg'_1 = (2l-1)(4l-1-2r_2)$ and $Q_4 = ((2l-1)(4l-1)-2l(4l-3)+r_2(4l-2), 2l-2)_{4l-2-2r_2}$ with end-edge labels sum $\sg'_2 = (2l-1)(2r_2+1)$. Moreover,   $Q_i$, $1\le i\le 4$, are in exactly three components.

\begin{enumerate}[(1)]
\item If $Q_1,Q_2$ are in distinct components respectively and $Q_3,Q_4$ are in the third component, then $\sg_2 +\sg'_2\le y$ gives $8\le 2r_1+2r_2\le 4l-5$ so that $l\ge 4$. All the solutions are
    \begin{align*} &\quad \ C(2r_1,(4l-4)^{[t_1]}, (4l-2)^{[2r_1-3t_1]})+C(2r_2,(4l-4)^{[t_2]}, (4l-2)^{[2r_2-3t_2]})\\
    & +C(4l-4-2r_1, 4l-2-2r_2, (4l-4)^{[l-1-t_1-t_2]}, (4l-2)^{[l+3t_1+3t_2-2r_1-2r_2-2]}), \end{align*}
    where $2\le r_1, r_2\le l-1$, $2r_1-l+2\le 3t_1\le 2r_1$, $2r_2-l+2\le 3t_2\le 2r_2$, $t_1+t_2\le l-1$,  $2r_1+2r_2+2-l \le 3(t_1+t_2) \le 2r_1+2r_2\le 4l-5$, and not all the components are isomorphic.


\item If $Q_1,Q_4$ are in distinct components respectively and $Q_2,Q_3$ are in the third component, then $\sg'_1+\sg_2 \le y$ gives $7\le 2r_1+3\le 2r_2 \le 2l-2$ so that $r_2\ge 4$, $l\ge 5$. All the solutions are
\begin{align*} & \quad \ C(2r_1,(4l-4)^{[t_1]}, (4l-2)^{[2r_1-3t_1]})+C(4l-2-2r_2,(4l-4)^{[t_2]}, (4l-2)^{[4l-2r_2-3t_2-2]})\\
    & +C(2r_2,4l-4-2r_1,(4l-4)^{[l-1-t_1-t_2]}, (4l-2)^{[3t_1+3t_2+2r_2-2r_1-3l]}),\end{align*}
    where $7\le 2r_1+3\le 2r_2 \le 2l-2$, $2r_1-l+2\le 3t_1\le 2r_1$, $3l-2r_2 \le 3t_2 \le 4l - 2r_2-2$, $t_1+t_2\le l-1$, $3l+2r_1-2r_2\le 3(t_1+t_2) \le 4l+2r_1-2r_2-2$,  and not all the components are isomorphic.

\item If $Q_2,Q_3$ are in distinct components respectively and $Q_1,Q_4$  are in the third component, then $\sg_1+\sg'_2\le y$ gives $5\le 2r_2+1\le 2r_1\le 2l-2$ so that $r_1\ge 3, l\ge 4$. All the solutions are
    \begin{align*}& \quad\ C(2r_2,(4l-4)^{[t_1]}, (4l-2)^{[2r_2-3t_1]})+C(4l-4-2r_1,(4l-4)^{[t_2]}, (4l-2)^{[4l-2r_1-3t_2-4]})\\
    &+C(2r_1, 4l-2-2r_2, (4l-4)^{[l-1-t_1-t_2]}, (4l-2)^{[3t_1+3t_2+2r_1+2-2r_2-3l]}),\end{align*}
    where $5\le 2r_2+1\le 2r_1\le 2l-2$, $2r_2 -l+2\le 3t_1 \le 2r_2$, $3l-2r_1-2 \le 3t_2 \le 4l-2r_1-4$, $t_1+t_2 \le l-1$, $3l+2r_2-2r_1-2\le 3(t_1+t_2) \le 4l+2r_2-2r_1-4$, and not all the components are isomorphic.

     \end{enumerate}
 \end{enumerate}

\begin{example} Consider (1) above. For $l=4$, we get $4\le r_1+r_2\le 5$. Take $r_1=r_2=2$, we get $t_1 = t_2 = 1$, and the solution is $2C(4, 12, 14) + C(8, 10, 12)$.  Take $r_1=2$, $r_2=3$, we get $t_1=1$, $t_2=2$, and the solution is $C(4,12, 14) + C(6, 12^{[2]}) + C(8, 8, 14)$.  Take $r_1=3,r_2=2$, we get $t_1 =2$, $t_2=1$, and the solution is $C(6,12^{[2]}) + C(4, 12, 14) + C(6, 10, 14)$.

\ms\nt Consider (2) above.  For $l=5$, we get $r_1 =2$, $r_2=4$, $t_1=1$, $t_2=3$. The solution is $C(4,16,18) + C(10,16^{[3]},18) + C(8,12,18)$.

\ms\nt Consider (3) above. For $l=4$, we get $r_1=3, r_2=2$, $t_1=1, t_2 = 1,2$ and $t_1+t_2\ge 3$. So, $(r_1,r_2,t_1,t_2) = (3,2,1,2)$. The solution is  $C(4,12,14) + C(6,12^{[2]}) + C(6,10,14)$.
\rsq
\end{example}

\begin{enumerate}[(a)]
\item[(c)] Both the induced paths are of length $4l-2$. By the same argument as in Subcase (a) above, for $r_1, r_2 \le l-1$, we split $(1,2l-2)$ (respectively $(2, 2l-3)$) to get $Q_1 =  (1,2l(4l-3)-r_1(4l-2))_{2r_1}$ with end-edge labels sum $\sg_1 = (2l-1)(4l-1-2r_1)$ and $Q_3 = ((2l-1)(4l-1)-2l(4l-3)+r_1(4l-2), 2l-2)_{4l-2-2r_1}$ with end-edge labels sum $\sg_2 = (2l-1)(2r_1+1)$ (respectively, to get $Q_2 = (2, 2l(4l-3)-1-r_2(4l-2))_{2r_2}$  with end-edge label $\sg'_1 = (2l-1)(4l-1-2r_2)$ and $Q_4 = ((2l-1)(4l-1)+1-2l(4l-3)+r_2(4l-2), 2l-3)_{4l-2-2r_2}$ with end-edge labels sum $\sg'_2 = (2l-1)(2r_2+1)$). Moreover, $Q_i$, $1\le i\le 4$, are in exactly three components.

  \begin{enumerate}[(1)]
  \item If $Q_1,Q_2$ are in distinct components respectively and $Q_3,Q_4$ are in the third component, then $\sg_2 +\sg'_2\le y$ gives $8\le 2r_1+2r_2\le 4l-3$ so that $l\ge 3$. All the solutions are
       \begin{align*} & \quad\ C(2r_1,(4l-4)^{[t_1]}, (4l-2)^{[2r_1-3t_1]})+C(2r_2,(4l-4)^{[t_2]}, (4l-2)^{[2r_2-3t_2]})\\& +C(4l-2-2r_1,4l-2-2r_2, (4l-4)^{[l-t_1-t_2]}, (4l-2)^{[3t_1+3t_2+l-2r_1-2r_2-3]}),\end{align*}
       where $r_1, r_2\le l-1$, $t_1+t_2\le l$, $2r_1-l+3 \le 3t_1\le 2r_1$,  $2r_2 - l + 3\le 3t_2 \le 2r_2$,  $2r_1+2r_2-l+3\le 3(t_1+t_2) \le 2r_1+2r_2 \le 4l-3$, and not all the components are isomorphic. By symmetry, we may assume if $r_1=r_2$, then $t_1\le t_2$, and if $t_1=t_2$, then  $r_1\le r_2$.

  \item If $Q_1,Q_4$ are in distinct components respectively and $Q_2,Q_3$ are in the third component, then $\sg'_1+\sg_2 \le y$ gives $5\le 2r_1+1 \le 2r_2 \le 2l-2$ so that $l\ge 4$ and $r_2\ge 3$.  All the solutions are
       \begin{align*}&\quad\ C(2r_1,(4l-4)^{[t_1]}, (4l-2)^{[2r_1-3t_1]})+C(4l-2-2r_2,(4l-4)^{[t_2]}, (4l-2)^{[4l-2r_2-3t_2-2]})\\ & +C(2r_2,4l-2-2r_1,(4l-4)^{[l-t_1-t_2]}, (4l-2)^{[3t_1+3t_2+2r_2-2r_1-3l-1]}),\end{align*}
       where $5\le 2r_1+1 \le 2r_2 \le 2l-2$, $t_1+t_2\le l$, $2r_1-l+3\le 3t_1 \le 2r_1$, $3l+1-2r_2 \le 3t_2 \le 4l-2r_2-2$,  $2r_1+3l+1-2r_2\le 3(t_1+t_2) \le 2r_1+4l-2r_2-2 \le 4l-3$, and not all the components are isomorphic.
 \item Suppose $Q_2,Q_3$ are in distinct components respectively and $Q_1,Q_4$  are in the third component. Similar to Case~(a)(3), the solutions are isomorphic to those in Case (c)(2) above.
  \end{enumerate}
\end{enumerate}


\begin{example}  Consider (1) above.  For $l=3$, we get $r_1 = r_2=2$ so that $3t_1=3t_2=4$, a contradiction.
 For $l=4$, we get  $2\le r_1\le r_2\le 3$.
\begin{enumerate}[(i)]
\item Take $(r_1,r_2)=(2,2)$, we get $t_1=t_2=1$ with $7\le 3(t_1+t_2) = 6$, a contradiction.
\item Take $(r_1,r_2)=(2,3)$, we get $t_1=1$, $t_2=2$. The solution is $C(4,12,14) + C(6,12^{[2]}) + C(8,10,12)$.
\item Take $(r_1,r_2)=(3,3)$, we get $t_1=t_2=2$. The solution is $2C(6,12^{[2]}) + C(8,8,14)$.
\end{enumerate}

\ms\nt Consider (2) above. For $l=4$, we get $r_1=2$, $r_2=3$ and $7\le 3t_2\le 8$, a contradiction.  For $l=5$, we get $2\le r_1\le 3$, $3\le r_2 \le 4$.
\begin{enumerate}[(i)]
\item Take $(r_1,r_2)=(2,3)$, we get $t_1=1$, $t_2=4$, The solution is $C(4, 16, 18) +C(12, 16^{[4]}) + C(6, 14, 18)$.
\item Take $(r_1,r_2)=(2,4)$, we get $t_1=1$, $t_2=3$. The solution is $C(4,16, 18) +C(10, 16^{[3]}, 18) + C(8, 14, 16)$.
\item Take $(r_1,r_2)=(3,4)$, we get $t_1=2$, $t_2=3$. The solution is $C(6, 16^{[2]}) +C(10, 16^{[3]}, 18) + C(8, 12, 18)$.  \rsq
\end{enumerate}
\end{example}

\ms\nt{\bf Subcase (II).} Suppose we split an induced path into three distinct paths. We have the following two subcases.
\begin{enumerate}[(a)]
\item The induced path is of length $4l-4$ and the 3 distinct paths are $Q_1, Q, Q_2$ of length $2r_1, 2r, 2r_2$ respectively satisfying $r_1,r_2, r\le 2l-6$ and $r_1+r_2+r = 2l-2$. Without loss of generality, let $Q_1 = (2l-1, 2(2l-1)(2l-1-r_1)_{2r_1}$ with end-edge labels sum $\sg_1 = (2l-1)(4l-1-2r_1)=(2l-1)(2r_2+2r+3)$, $Q_2 = ((2l-1)(2r_1+2r+1),4l-2)_{2r_2}$ with end-edge labels sum $\sg_2 = (2l-1)(2r_1+2r+3)$, and $Q=((2l-1)(2r_1+1), (2l-1)(4l-2-2r_1-2r))_{2r}$ with end-edge labels sum $\sg=(2l-1)(4l-1-2r) =(2l-1)(2r_1+2r_2+3)$.
    Clearly $\sg_1+\sg$, $\sg_2+\sg$ and $\sg_1+\sg_2$ are greater than $y$. So $Q_1$, $Q_2$ and $Q$ are in distinct components. Note that $l\ge 4$. By symmetry, we may assume $r_1\le r_2$.
Thus, all the solutions are
\begin{align*}& \quad\ C(2r_1, (4l-4)^{[t_1]}, (4l-2)^{[2r_1-3t_1]}) + C(2r_2, (4l-4)^{[t_2]}, (4l-2)^{[2r_2-3t_2]}) \\&+ C(4l-4-2r_1-2r_2, (4l-4)^{[l-1-t_1-t_2]}, (4l-2)^{[3t_1+3t_2+l -2r_1-2r_2-1]}),\end{align*}
 where $r_1\le r_2$, $r_1+r_2\le 2l-4$, $2r_1-l+1\le 3t_1\le 2r_1$, $2r_2-l+1 \le 3t_2 \le 2r_2$, $2r_1+2r_2-l+1\le 3(t_1+t_2) \le 3(l-1)$. By symmetry, we may assume if $r_1=r_2$, then $t_1\le t_2$, and if $t_1=t_2$, then $r_1\le r_2$.  Moreover, all the three components are isomorphic if and only if $r_1=r_2=r=\frac{2l-2}{3} = 2t$, $2t_1+t_2=3t = 2t_2+t_1$, i.e., $t_1=t_2=t$ for $l=3t+1\ge 4$ if and only if the solutions are $3C(4t, (12t)^{[t]}, (12t+2)^{[t]})$, $t\ge 1$.
\end{enumerate}

\begin{example}
For $l=4$, we get $r_1=r_2=2$, $t_1=t_2=1$. The solution is $3C(4, 12, 14)$.

\ms\nt For $l=5$, we get $2\le r_1\le  r_2$, $r_1+r_2\le 6$, $t_1+t_2 \le 4$.
\begin{enumerate}[(i)]
\item Take $(r_1,r_2) = (2,2)$, one can verify that $t_1=t_2=1$. The solution is $2C(4,16,18)+C(8,16^{[2]},18^{[2]})$.
\item Take $(r_1,r_2)=(2,3)$, one can verify that $(t_1,t_2) \in \{(0,2), (1,1), (1,2)\}$. The solutions are $C(4,18^{[4]}) + 2C(6,16^{[2]})$, $C(4,16,18)+C(6,16,18^{[3]})+ C(6, 16^{[2]})$ and $C(4,16,18) + C(6,16^{[2]}) + C(6,16,18^{[3]})$, respectively.
\item Take $(r_1,r_2)=(2,4)$,  one can verify that $t_1=1,t_2=2$. The solution is $C(4,16,18) + C(8,16^{[2]},18^{[2]}) + C(4,16,18)$.
\item Take $(r_1,r_2)=(3,3)$, one can verify that $(t_1,t_2) \in \{(1,2), (2,2)\}$. The solutions are $C(6,16,18^{[3]}) + C(6,16^{[2]}) + C(4,16,18)$ and $2C(6,16^{[2]}) + C(6,18^{[4]})$, respectively. \rsq
\end{enumerate}
\end{example}

\begin{enumerate}[(a)]
\item[(b)] The induced path is of length $4l-2$ and the 3 distinct paths are $Q_1, Q, Q_2$ of length $2r_1, 2r, 2r_2$ respectively satisfying $r_1,r_2,r\le 2l-5$ and $r_1+r_2+r=2l-1$. Note that $l\ge 4$. Similar to Subcase (a) above, let $Q_1 = (1, 2l(4l-3)-r_1(4l-2))_{2r_1}$ with end-edge labels sum $\sg_1 = (2l-1)(4l-1-2r_1) = (2l-1)(2r_2+2r+1)$, $Q_2 = ((2l-1)(4l-1)-2l(4l-3)+(r_1+r)(4l-2), 2l-2)_{2r_2}$ with end-edge labels sum $\sg_2 = (2l-1)(2r_1+2r+1)$, and $Q=((2l-1)(4l-1)-2l(4l-3)+r_1(4l-2), 2l(4l-3)-(r_1+r)(4l-2))_{2r_1+2r}$ with end-edge labels sum $\sg=(2l-1)(4l-1-2r) = (2l-1)(2r_1+2r_2+1)$. Moreover,  $Q_1$, $Q_2$ and $Q$ are in distinct components.
  Thus, all the solutions are
\[C(2r_1, (4l-4)^{[t_1]} , (4l-2)^{[2r_1-3t_1]}) + C(2r_2, (4l-4)^{[t_2]}, (4l-2)^{[2r_2-3t_2]}) + C(2r, (4l-4)^{[t]}, (4l-2)^{[2r-3t]}),\] where  $t_1+t_2+t=l$, $r_1+r_2+r=2l-1$, $2r'-l+2\le 3t' \le 2r'$ for $(r',t') \in\{(r_1,t_1), (r_2,t_2), (r,t)\}$, $2r'+2r''-l+2\le 3(t'+t'') \le 3l$ for $(r', r'', t', t'') \in \{(r_1,r_2, t_1,t_2), (r_1,r,t_1,t), (r_2,r,t_2,t)\}$.  By symmetry, we may assume $r_1\le r_2\le r$. If equalities hold, we may assume $t_1\le t_2 \le t$. 
 Moreover, if all the three components are isomorphic, then $r_1=r_2=r=\frac{2l-1}{3}$ so that $l\equiv 2\pmod{3}$, and that $t_1=t_2=t=\frac{l}{3}$, i.e., $l\equiv 0\pmod{3}$, a contradiction.
\end{enumerate}

\begin{example} For $l=4$, we get $(r_1, r_2, r)=(2,2,3)$ and $t_1=t_2=1,t=2$. The solution is $2C(4, 12, 14)+C(6, 12^{[2]})$.

\ms\nt For $l=5$, we get $(r_1,r_2,r) = (2,2,5), (2,3,4), (3,3,3)$. Thus, $(t_1,t_2,t)  = (1,1,3), (1,2,2), (1,2,2)$, respectively. The solutions are $2C(4, 16, 18) + C(10, 16^{[3]}, 18)$, $C(4, 16, 18) + C(6, 16^{[2]}) + C(8, 16^{[2]}, 18^{[2]})$, $C(6, 16, 18^{[3]}) + C(6, 16^{[2]}) + C(6, 16^{[2]})$, respectively. \rsq
\end{example}

\ms\nt{\bf Case F.} In this case, $s\ge 5$, $y=2(s-2)(2s-5)$ and $D=[1, 2s-5]\cup\{y/2, \gamma_1, \gamma_2, \gamma_3,\gamma_4\}$, where $\gamma_1+\gamma_3=\gamma_2+\gamma_4=y$. There are $s-3$ induced paths of length $4s-10$, namely $A_{2s-5}(i;2s-5)\diamond A_{2s-5}((2s-5)^2-i; -2s+5)$ for $1\le i\le s-3$, with end-edge labels sum $2s-5$, and an induced path  of length $2s-6$ with edge labels $A_{s-3}(2s-5; 2s-5)\diamond A_{s-3}((2s-6)(2s-5); -2s+5)$ with end-edge labels sum $(s-1)(2s-5)$. Moreover, two of these paths are split into two paths of even length at least 4, respectively, or else one of these paths is split into three paths of even length at least 4. We have two subcases.

\ms\nt{\bf Subcase (I).} Suppose we split two induced paths into two distinct paths respectively. We have the following two subcases.

\begin{enumerate}[(a)]
\item Both the induced paths are of length $4s-10$. For $r_1,r_2\le s-3$, by the same argument as in Case E, split  the induced path $(1,2s-6)$ (respectively, path $(2,2s-7)$) to get $Q_1 = (1, (2s-5)(2s-4-r_1) - 1)_{2r_1}$ with end-edge labels sum $\sg_1 = (2s-5)(2s-4-r_1)$ and $Q_3 = (1+r_1(2s-5), 2s-6)_{4s-10-2r_1}$ with end-edge labels sum $\sg_2 = (2s-5)(r_1+1)$ (respectively, to get $Q_2 = (2, (2s-5)(2s-4-r_2)-2)_{2r_2}$ with end-edge labels sum $\sg'_1 = (2s-5)(2s-4-r_2)$ and $Q_4 = (2+r_2(2s-5),2s-7)_{4s-10-2r_2}$ with end-edge labels sum $\sg'_2 = (2s-5)(r_2+1)$). Note that both $Q_1,Q_3$ (and $Q_2,Q_4$) must not belong to the same component of each solution. By a similar argument as in Case~E(I)(a) and the claim, we have  $Q_i$, $1\le i\le 4$, are in exactly three components, and that $Q_1,Q_2$ are not in the same component (and $Q_3,Q_4$ are in distinct component respectively) of any solutions.

If the induced paths of length $2s-6$ and $Q_i$ are in the same component, $i=1,2$, then the solution graph contains a component $C(2r_i, (4s-10)^{[(r_i-s+1)]}, 2s-6)$. This implies that $r_i-s+1\ge 0$ which contradicts $r_i\le s-3$.
\begin{enumerate}[(1)]
\item  Suppose $Q_1,Q_2$ are in distinct components respectively and $Q_3,Q_4$ are in the third component. Thus, the induced paths of length $2s-6$ and $Q_3, Q_4$ are in the same  component. Therefore, all the solutions are
  \begin{align*} &\quad\ C(2r_1, (4s-10)^{[r_1]}) + C(2r_2, (4s-10)^{[r_2]}) \\&+ C(4s-10-2r_1, 4s-10-2r_2, (4s-10)^{[s-r_1-r_2-5]}), 2s-6) ,\end{align*} where $r_1, r_2\le s-3$, $4\le r_1+r_2\le s-5$, and not all the components are isomorphic. Clearly, $s\ge 9$.

\item Suppose $Q_1,Q_4$ are in distinct components respectively and $Q_2,Q_3$ are in the third component. Thus, the induced paths of length $2s-6$ and $Q_4$ are in the same  component. Therefore, all the solutions are
\begin{align*}& \quad\ C(2r_1, (4s-10)^{[r_1]}) + C(4s-10-2r_2, (4s-10)^{[s-4-r_2]}, 2s-6)\\& + C(2r_2, 4s-10-2r_1, (4s-10)^{[r_2-r_1-1]}),\end{align*}
         where $r_1\le s-3$, $r_2\le s-4$, $1\le r_2-r_1 \le s-4$, and not all the components are isomorphic. Clearly, $s\ge 6$.

\item Suppose $Q_2,Q_3$ are in distinct components respectively and $Q_1,Q_4$ are in the third component. We may swap the notation $r_1,r_2$ so that $Q_1,Q_3$, and $Q_2,Q_4$ are swapped. Therefore, the solutions are isomorphic to those in Case (2) above.
 \end{enumerate}
  \end{enumerate}
\begin{example}  Consider (1) above.  For $s=9$, we get $r_1=r_2=2$. The unique solution is $2C(4,26^{[2]}) + C(12, 22,22)$.

\ms\nt  Consider (2) above. Note that, there is no solution for $s=6$. For $s=7$, we get $r_1=2$ and $r_2=3$. The unique solution is $C(4, 18^{[2]}) + C(8, 12) + C(6, 14)$. \rsq
\end{example}
\begin{enumerate}[(a)]
\item[(b)] One induced path is of length $4s-10$ and another is of length $2s-6$, $s\ge 7$. By the same argument as in Case (a) above, for $r_1\le s-3$, we split the induced path $(1,2s-6)$  to get $Q_1 = (1, (2s-5)(2s-4-r_1) - 1)_{2r_1}$ with end-edge labels sum $\sg_1 = (2s-5)(2s-4-r_1)$ and $Q_3 = (1+r_1(2s-5), 2s-6)_{4s-10-2r_1}$ with end-edge labels sum $\sg_2 = (2s-5)(r_1+1)$. For $r_2 \le (s-4)/2$, we split the induced path $(2s-5, (s-2)(2s-5))$ to get $Q_2 = (2s-5, (2s-5)(2s-5-r_2))_{2r_2}$ with end-edge labels sum $\sg'_1 = (2s-5)(2s-4-r_2)$ and $Q_4 = ((2s-5)(r_2+1), (s-2)(2s-5))_{2s-6-2r_2}$ with end-edge labels sum $\sg'_2=(2s-5)(r_2+s-1)$.
Since $r_1\le s-3$ and $r_2\le \frac{s}{2}-2$, $\sg_1+\sg_2=(2s-5)(2s-3)$, $\sg_1'+\sg_2'=(2s-5)(3s-5)$, $\sg_1+\sg_1'=(2s-5)(4s-8-(r_1+r_2))$ and $\sg_1+\sg'_2 = (2s-5)(2s-4+r_2+s-1-r_1)\ge (2s-5)(3s-5+2-(s-3))-(2s-5)(2s)$ are greater than $y$. So, in each of $\{Q_1, Q_3\}$, $\{Q_2, Q_4\}$, $\{Q_1, Q_2\}$, $\{Q_1, Q_4\}$ both the induced paths are not in the same component. Consequently, $Q_i$, $1\le i\le 4$, are in exactly three components and $Q_1$ is in a component alone.

\begin{enumerate}[(1)]
\item If $Q_1, Q_2$ are in distinct component respectively and $Q_3,Q_4$ are in the third component, then $\sg_2+\sg'_2\le y$ gives $r_1+r_2\le s-4$.  All the solutions are
\[C(2r_1, (4s-10)^{[r_1]}) + C(2r_2, (4s-10)^{[r_2]}) + C(4s-10-2r_1, 2s-6-2r_2, (4s-10)^{[s-4-r_1-r_2]}),\]
 where $r_1\le s-3$, $r_2\le (s-4)/2$, $4\le r_1+r_2\le s-4$, and not all the components are isomorphic. Clearly, $s\ge 8$. By symmetry, we may assume $r_1\le r_2$.

\item  If $Q_1, Q_4$ are in distinct component respectively and $Q_2,Q_3$ are in the third component, then $\sg'_1+\sg_2\le y$ gives $r_1\le r_2-1\le s/2-3$ so that $s\ge 10$ and $r_2\ge 3$. All the solutions are
\[C(2r_1, (4s-10)^{[r_1]}) + C(2s-6-2r_2, (4s-10)^{[s-r_2-3]}) + C(2r_2, 4s-10-2r_1, (4s-10)^{[r_2-r_1-1]}),\]
where $r_1\le r_2-1\le s/2-3$, $r_2-r_1\le s-3$, and not all the components are isomorphic.
 \end{enumerate}
\end{enumerate}

\begin{example}  Consider (1) above. For $s=8$, we get $r_1=r_2=2$. The solution is $2C(4,22^{[2]}) + C(18, 6)$.

\ms\nt Consider (2) above. For $s=10$, we get $r_1=2$, $r_2=3$. The solution is $C(4, 30^{[2]}) + C(8, 30^{[4]}) + C(6,26)$.   \rsq
\end{example}

\ms\nt{\bf Subcase (II).} Suppose we split an induced path into three distinct paths. We have the following two subcases.
\begin{enumerate}[(a)]
\item The induced path is of length $4s-10$, $s\ge 6$ and the 3 distinct paths are $Q_1,Q,Q_2$ of length $2r_1, 2r, 2r_2$ respectively satisfying $r_1, r_2, r\le 2s-9$ and $r_1+r_2+r=2s-5$. Without loss of generality, let $Q_1 = (1,(2s-5)(2s-4-r_1)-1)_{2r_1}$ with end-edge labels sum $\sg_1 = (2s-5)(2s-4-r_1)$, $Q_2 = ((2s-5)(r_1+r)+1, 2s-6)_{2r_2}$ with end-edge labels sum $\sg_2 = (2s-5)(r_1+r+1)=(2s-5)(2s-4-r_2)$, and $Q= (1+r_1(2s-5), (2s-5)(2s-4-r_1-r)-1)_{2r}$ with end-edge labels sum $\sg = (2s-5)(2s-4-r)$. Clearly, each of $Q_1,Q_2,Q$ must belong to distinct component of each solution. By symmetry on $\sg_1, \sg_2, \sg$, we may assume $Q_1$ and the induced paths of length $2s-6$ are in the first component. Thus, all the solutions are
  \[C(2r_1, (4s-10)^{[r_1-s+1]}, (2s-6)) + C(2r_2, (4s-10)^{[r_2]}) + C(2r,  (4s-10)^{[r]}),\]
   where $s-1 \le r_1\le 2s-9$, $2\le r_2\le s-6$, $r_1+r_2+r=2s-5$, $s+1\le r_1+r_2 \le 2s-7$. Note that $s\ge 8$, and not all the components are isomorphic.
\end{enumerate}
\begin{example}
   For $s=8$, we get $r_1=7$, $r_2 = 2$, $r_1+r_2+r=11$ so that $r=2$. The solution is $C(14,10) + C(4, 22^{[2]}) + C(4,22^{[2]})$. \rsq
\end{example}

\begin{enumerate}[(a)]
\item[(b)] The induced path is of length $2s-6$, $s\ge 9$, and the 3 distinct paths are $Q_1,Q,Q_2$ of length $2r_1, 2r, 2r_2$ respectively satisfying $r_1, r_2, r\le s-7$ and $r_1+r_2+r=s-3$. We may assume $Q_1 = (2s-5, (2s-5)(2s-5-r_1))_{2r_1}$ with end-edge labels sum $\sg_1 = (2s-5)(2s-4-r_1)$, $Q_2 = ((2s-5)(r_1+r+1), (s-2)(2s-5))_{2r_2}$ with end-edge labels sum $\sg_2 = (2s-5)(r_1+r+s-1) =(2s-5)(2s-4-r_2)$, and $Q = ((2s-5)(r_1+1), (2s-5)(2s-5-r_1-r))_{2r}$ with end-edge labels sum $\sg = (2s-5)(2s-4-r)$. Clearly, each of $Q_1,Q_2,Q$ must belong to distinct component of each solution. Thus, all the solutions are
     \[C(2r_1, (4s-10)^{[r_1]}) + C(2r_2, (4s-10)^{[r_2]}) + C(2r, (4s-10)^{[r]}),\]
    where $r_1, r_2, r\le s-7$, $r_1+r_2+r=s-3$. By symmetry, we may assume $r_1\le r_2\le r$.

 Note that all components are isomorphic only if $s=3r+3\ge 9$ and $r_1=r=r_2\ge 2$ and the solutions are $3C(2r, (12r+2)^{[r]})$. For $s=3r+3$, we may get other solutions by taking $r_1=s-1$, $r=s$, $r_2=s+1$.
\end{enumerate}

\begin{example}
For $s=9$, we get $r_1=r_2=r=2$. The solution is $3C(4, 26^{[2]})$.

\nt For $s=10$, we get $r_1=r_2=2$, $r=3$. The solution is $2C(4, 30^{[2]}) + C(6, 30^{[3]})$.

\nt For $s=12$, we must get  $(r_1,r_2,r)=(2,3,4)$, $(3,3,3)$. The solutions are  $C(4, 38^{[2]})+C(6, 38^{[3]})+C(8, 38^{[4]})$ and $3C(6, 38^{[3]})$, respectively.
\rsq
\end{example}

\nt The proof is now complete. \end{proof}


\begin{corollary}\label{cor-3-isocomp} For a one-point union of cycles $G$,  $\chi_{la}(3G)=2$  if and only if
\begin{enumerate}[(a)]
\item $G\cong C((12t)^{[5t+1]}, (12t+2)^{[t]})$, $t\ge 1$, under Case A-1.
\item $G\cong C(2l-2,(4l-4)^{[(2l-2)/3]}, (4l-2)^{[(l-1)/3]})$, $l\equiv 1\pmod{3}\ge 4$, under Case D-1.
\item $G\cong C(4t, (12t)^{[t]}, (12t+2)^{[t]})$, $t\ge 1$, under Case E,
\item $G\cong C(2r, (12r+2)^{[r]})$, $r\ge 2$, under Case F.
\end{enumerate}
\end{corollary}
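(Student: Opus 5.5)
The plan is to read the corollary off Theorem~\ref{thm-3comp}. $3G$ is a disjoint union of three one-point unions of cycles. If $\chi_{la}(3G)=2$, then every vertex of degree at least $3$ gets the larger label $y$ by Lemma~\ref{lem-induced}(a). We also check that every induced cycle of $3G$ is even. If some cycle were odd, its degree-2 vertices could not alternate between the two colours around it, forcing a third colour; so $3G\in\widetilde{\mathcal B}_3$, and in fact $3G\in\mathcal A_3$ since it has exactly three components. The question then reduces to deciding which entries of Tables~\ref{tab:3com-1}--\ref{tab:3com-3} have three pairwise isomorphic components. Conversely, each graph listed in (a)--(d) already appears in those tables, so Theorem~\ref{thm-3comp} supplies the local antimagic 2-coloring and gives sufficiency.

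For necessity, I will go through the cases of Theorem~\ref{thm-3comp} in order. In each case the proof above already records whether isomorphic components can occur.
\begin{itemize}
\item \textbf{Case A-1.} The equality $s=6l+3=3(\m+\n)$ forces $l=3t$, $\m=5t+1$, $\n=t$, which gives (a).
\item \textbf{Case A-2.} $6l+1$ is not divisible by $3$, so there is no solution.
\item \textbf{Cases B-1 and B-2.} These are excluded because exactly one component contains the distinguished short path (of length $2l-2$, respectively $2l$), while the other two do not.
\item \textbf{Case C.} Here the two split pieces have lengths $2r$ and $4l-2r$ (or $4l+2-2r$) and lie in different components, and the third component contains no split piece. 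That third component has only cycles of length $4l$ and $4l+2$, whereas each of the other two contains a short cycle of length at most $4l-4$. So the components cannot all be isomorphic.
\item \textbf{Case D.} In Case D-1 only Subcase (II), Type $S_3$, allows isomorphic components, with $r=l-1$ and $t_1=t_2=(2l-2)/3$; this yields (b). In Subcase (I) and in Types $S_1,S_2$ the short pieces are distributed unevenly, so two components differ in their multiset of cycle lengths. Case D-2 has no solutions at all.
\item \textbf{Case E.} Subcase II(a) gives $l=3t+1$ and the graph in (c). In II(b) the divisibility conditions $l\equiv 2$ and $l\equiv 0\pmod 3$ contradict each other. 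In Subcase (I) the third component contains two split pieces while the first two contain one each, so they cannot match.
\item \textbf{Case F.} Only Subcase II(b) with $r_1=r_2=r$, $s=3r+3$ survives. This gives $C(2r,(4s-10)^{[r]})=C(2r,(12r+2)^{[r]})$, which is (d).
\end{itemize}

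The main obstacle is the step ruling out isomorphism in the mixed subcases, namely Case C, Types $S_1,S_2$ and Subcase (I) of Case D-1, and Subcase (I) of Cases E and F. For these I would compare the number of cycles of length strictly less than the two main lengths in each component. The component that receives two pieces, or the distinguished path of length $2l-2$ or $2s-6$, has a different count from a component receiving one piece. This count is invariant under isomorphism, so it settles these cases uniformly without solving the parameter inequalities.
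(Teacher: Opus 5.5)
Your overall route is the paper's: the corollary is read off from the case analysis in the proof of Theorem~\ref{thm-3comp}. Your reduction is fine: $\chi_{la}(3G)=2$ makes $3G$ bipartite with exactly three vertices of degree at least $4$, hence $3G\in\mathcal A_3$. Counting the cycles shorter than the main lengths in each component is also a sound invariant, and it actually justifies what the paper only asserts ("not all the components are isomorphic").

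There is one entry you never treat, and your invariant cannot handle it. This is Case D-1, Subcase (III) (a path of length $4l-2$ is split), Type $S_3$:
\[
C(2r,\dots)+C(4l-2-2r,\dots)+C(2l-2,\dots).
\]
Each of these three components contains exactly one cycle shorter than $4l-4$, so the short-cycle counts are $1,1,1$. Your closing claim that the count "settles these cases uniformly" is therefore false here. Your Case D bullet covers only Subcase (I), Types $S_1,S_2$ (where the counts $2,1,0$ do work), and Subcase (II) Type $S_3$. To close the gap, compare the short lengths themselves. Isomorphism would require $2r=4l-2-2r=2l-2$. But $2r=2l-2$ forces $4l-2-2r=2l\neq 2l-2$. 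By contrast, in Subcase (II) Type $S_3$ the analogous system $2r=4l-4-2r=2l-2$ is consistent, which is exactly where (b) comes from.

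Two smaller points:
\begin{itemize}
\item In Case C, Subcase (II), the piece $4l+2-2r$ equals $4l-2$ when $r=2$, so "a short cycle of length at most $4l-4$" is wrong. What you need, and what is true, is that both pieces are shorter than $4l$. That still separates them from the third component, which has only cycles of lengths $4l$ and $4l+2$.
\item For sufficiency, "already appears in the tables" requires checking that the isomorphic parameter choices satisfy the inequalities of the relevant entry. For (b), $r=l-1$ and $t_1=t_2=(2l-2)/3$ must satisfy $2l-2\le 4t_1\le 3l-3$ and $5l-5\le 4(t_1+t_2)$. For (d), $r\le s-7=3r-4$ is what gives $r\ge 2$. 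These checks succeed, but you should state them.
\end{itemize}
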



\section{Graphs in $\mathcal A_2$}\label{sec:2comp}


\nt The ideas used in this section are similar to those in Section~\ref{sec:3comp}. For completeness, all the details are given. Using Lemma~\ref{lem-induced}, we have the following crucial theorem that allows us to obtain all the graphs in $\mathcal A_2$ in this section. Note that we can also get Theorem~\ref{thm-iff-d2} from the proof of Theorem 2.1 in~\cite{LSNZP}.

 \begin{theorem}\label{thm-iff-d2} A graph $G$ with $s\ge 3$ induced paths (or cycles) is in $\widetilde{\mathcal B}_2$ if and only if
\begin{table}[H]
\fontsize{9}{12}\selectfont
\begin{tabular}{l|l|l|l|l}
Case & $s$ & Induced paths & Induced paths & Induced path\\\hline
(1) & $s=4l+2\ge 6$ & $l$ paths of length $4l+2$ & $3l+2$ paths of length $4l$ & not applicable \\\hline
(2a) & $s=3l\ge 6$ & $3l-1$ paths of length $4l-2$ & one path of length $2l-2$ & not applicable \\\hline
(2b)   &  $s=3l-1$ & $l-1$ paths of length $4l-2$  & $2l-1$ paths of length $4l-4$ &  one path of length $2l-2$    \\\hline
(3)$^*$ & $s=2l\ge 4$ & $l-1$ paths of length $4l-2$ & $l$ paths of length $4l-4$ & not applicable \\\hline
(4)$^*$ & not applicable   & $s-2$ paths of length $4s-6$ & one path of length $2s-4$ & not applicable \\\hline
\end{tabular}\\
\normalsize
\nt $^*$ One of these paths is split into two paths of even length at least 4. 
\caption{Exact value of $s$ and the corresponding number of paths and lengths.}\label{table-2-1}
\end{table}

\nt Moreover, the edge labels are unique (up to isomorphism) as follows:
\begin{table}[H]
\fontsize{10}{14}\selectfont
\begin{tabular}{l|l|l}
Case & Path length & Consecutive edge labels      \\\hline
(1) & $4l+2$ & $A_{2l+1}(i;8l+4)\diamond A_{2l+1}(16l^2+10l+1-i; -8l-4)$, $1\le i\le l$ \\
 & $4l$  & $A_{2l}(2l+j; 8l+4)\diamond A_{2l}(16l^2+8l+1-j; -8l-4)$, $1\le j\le 3l+2$ \\\hline
(2a) & $4l-2$ & $A_{2l-1}(i; 6l-1)\diamond A_{2l-1}(12l^2-8l+1-i; -6l+1)$, $1\le i\le 3l-1$  \\
  & $2l-2$ &  $A_{l-1}(6l-1; 6l-1)\diamond A_{l-1}(12l^2-14l+2; -6l+1)$ \\\hline
(2b) & $4l-2$    &  $A_{2l-1}(i; 6l-3) \diamond A_{2l-1}(12l^2-16l+5-i; -6l+3)$, $1\le i\le l-1$  \\
      &  $4l-4$   &   $A_{2l-2}(2l-2+j; 6l-3) \diamond A_{2l-2}(12l^2-18l+7-j; -6l+3)$, $1\le j\le 2l-1$    \\
    &    $2l-2$  &   $A_{l-1}(4l-2; 6l-3) \diamond A_{l-1}(12l^2-20l+7; -6l+3)$                \\\hline
(3) & $4l-2$ & $A_{2l-1}(i; 4l-2) \diamond A_{2l-1}(8l^2-10l+3-i; -4l+2)$, $1\le i\le l$ \\
 & $2l-2$ & $A_{l-1}(2l-2+j; 4l-2)\diamond A_{l-1}(8l^2-12l+5-j; -4l+2)$, $1\le j\le l-1$ \\\hline
(4) & $4s-6$ & $A_{2s-3}(i; 2s-3)\diamond A_{2s-3}(4s^2-12s+9-i; -2s+3)$, $1\le i\le s-3$ \\
 & $2s-4$ & $A_{s-2}(2s-3; 2s-3)\diamond A_{s-2}(4s^2-14s+12; -2s+3)$ \\\hline
\end{tabular}
\caption{Edge labels of each induced path.}\label{table-2-2}
\end{table}
\end{theorem}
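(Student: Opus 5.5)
We will prove Theorem~\ref{thm-iff-d2} by specializing Lemma~\ref{lem-induced} to $d=2$ and then repeating the case analysis that produced Theorem~\ref{thm-iff-d3} (equivalently, by extracting it from the proof of Theorem 2.1 in~\cite{LSNZP}).

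\emph{Step 1: reduce to a finite list of numerical cases.} With $d=2$, Lemma~\ref{lem-induced}(e) gives integers $b\ge a\ge 2$ with $a\in\{2,4,6,8\}$ and
\[
a+b=2y+14-8s,\qquad ab=8(2s^2-7s+6)=8(s-2)(2s-3).
\]
Here $y=m(m+1)/(m-2s+4)$, $x=m+1$, and $m=(b+4s-8)/2$. We run through the four values of $a$; each one gives $b=ab/a$ explicitly in terms of $s$.
\begin{itemize}
\item If $a=2$, then $b=4(s-2)(2s-3)$. This is the generic family, which becomes Case (4) with $s-2$ long paths and one short path.
\item If $a=4$, then $b=2(s-2)(2s-3)$.
\item If $a=6$ or $a=8$, we need divisibility: $3\mid (s-2)(2s-3)$ or $(s-2)(2s-3)$ even, respectively.
\end{itemize}
The divisibility conditions force congruence classes of $s$, namely $s\equiv 0,2 \pmod 3$, $s$ even, and so on. After writing $s=4l+2$, $3l$, $3l-1$ or $2l$, we get Cases (1), (2a), (2b) and (3). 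For each case we then compute $m$, $y$ and $D'$.

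\emph{Step 2: force the path lengths and the labels.} Every non-end edge of an induced path lies between two degree-2 vertices, each of label $x=m+1$. So along a path of length $2r$ the labels alternate in the pattern $\alpha_1,\ x-\alpha_1,\ \alpha_1+\delta,\dots$. Concretely, the path is determined by $\alpha_1$ and a common difference, giving the form $A_r(\alpha_1;\delta)\diamond A_r(\cdot;-\delta)$, and its end-edge sum is determined by $r$.

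Since $D'\subseteq D$ and all small labels in $[1,y-x]$ must be end-edge labels, we pair them off. The condition that the two end-sums at $v_1$ (resp.\ $v_2$) both equal $y$, together with the edge count $m$, then pins down two things: how many paths there are of each length, and the unique admissible difference ($8l+4$, $6l-1$, $6l-3$, $4l-2$ or $2s-3$). This reproduces Tables~\ref{table-2-1} and~\ref{table-2-2}.

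The splitting phenomenon in Cases (3) and (4) (marked $^*$) comes from end-edge labels outside $D'$. There, some label pair $\gamma_1+\gamma_2=y$ is not accounted for by $[1,y-x]\cup\{y/2\}$. The only way to use such a pair is to cut one long path at an interior vertex into two even paths of length at least $4$, which are then attached at $v_1$ and $v_2$ respectively.

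\emph{Step 3: sufficiency.} For the converse, we check directly that the labelings in Table~\ref{table-2-2} work. They must be bijective onto $[1,m]$, every degree-2 vertex must receive $x$, and each $v_i$ must receive $y$. This is a routine arithmetic-progression sum; it is identical in form to the verification in~\cite{Lau+Shiu+chila2d=3}.

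\emph{Main obstacle.} The hardest part is Step 2 in the split cases: showing that at most one path can be split, and that the split pieces have even lengths at least $4$. Here we will argue as in Case C of the proof of Theorem~\ref{thm-3comp}. Each additional split introduces new end-sums of the form $(\text{const}-kr)\cdot\text{difference}$. Two such splits would make the total end-edge sum at $v_1$ exceed $2y$, or would force a label repetition. So the count in Lemma~\ref{lem-induced}(e) with $d=2$ allows exactly one extra pair of end edges.
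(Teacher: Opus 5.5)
Your overall route is the one the paper indicates: it gives no argument beyond invoking Lemma~\ref{lem-induced} with $d=2$, or the proof of Theorem~2.1 in [LSNZP]. Your Step~1 is correct. From $ab=8(s-2)(2s-3)$, the values $a=8,6,4,2$ give $m=2l(8l+5)$ for $s=4l+2$; $m=4l(3l-2)$ or $4(l-1)(3l-1)$ for $s=3l$ or $3l-1$; $m=2(l-1)(4l-1)$ for $s=2l$; and $m=4(s-1)(s-2)$. These are exactly the edge counts of Cases (1), (2a), (2b), (3), (4). For $a=4,8$ the congruence on $s$ comes from $m$ being an even integer ($x=m+1$ odd), not from divisibility of $b$.

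Step~2, however, rests on a false premise: the internal vertices of an induced path do \emph{not} all have label $x$. Since $f^+$ is a proper 2-colouring of a bipartite graph with $f^+(v_i)=y$, every edge has exactly one endpoint labelled $x$. So along a path of length $2r$ the internal labels alternate $x,y,\dots,x$; if they were all $x$ you would get $e_1=e_3$. This alternation is what gives $e_{2k+1}=e_{2k-1}+(y-x)$ and the end-sum $\alpha_1+\beta_r=x-(r-1)(y-x)$. Hence the common difference is simply $y-x$, which equals $8l+4$, $6l-1$, $6l-3$, $4l-2$, $2s-3$ in the five cases; nothing has to be ``pinned down''. The lengths and multiplicities then come from this end-sum formula applied to the explicit set $D'$, not from the vertex sums at $v_1,v_2$. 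For example, in Case (1) you have $D=D'=[1,8l+4]$ and $x-(r-1)(y-x)=(2l+1-r)(8l+4)+2l+1$. So only $r=2l+1$ (pairing $[1,2l]$ with sum $2l+1$) and $r=2l$ (pairing $[2l+1,8l+4]$ with sum $10l+5$) are possible. The conditions $f^+(v_i)=y$ only constrain how the paths are attached. For the same reason, sufficiency in Step~3 is not a routine check: it holds only for a suitable distribution of the paths at $v_1,v_2$.

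For the split cases, your ``exceed $2y$'' argument does not work, because the end labels always sum to exactly $2y$. What you need is the following.
\begin{enumerate}[(i)]
\item $|D|=2s$, while $|D'|=2s$ in Cases (1), (2a), (2b), so $D=D'$ there, and $|D'|=2s-2$ in Cases (3), (4).
\item If $\gamma\in D\setminus D'$, then $y-\gamma\in[1,m]\setminus D'$. The edge labelled $y-\gamma$ cannot have a degree-2 endpoint of label $y$, since its partner there would be the end edge $\gamma$. Hence $D\setminus D'=\{\gamma,y-\gamma\}$.
\item Gluing these two end edges at a new vertex recovers a single unsplit path. They cannot be the two ends of one path, since that would force $r(y-x)=0$.
\end{enumerate}

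Moreover, the claim that both pieces have length at least $4$ cannot be proved for $\widetilde{\mathcal B}_2$ in general. Take Case (3) with $l=3$, so $m=44$, $x=45$, $y=55$. Split the path $5,40,15,30,25,20,35,10$ into a $(v_1,v_2)$-path with labels $5,40$ and a $6$-cycle at $v_1$ with end labels $15,10$. At $v_1$, attach the two $10$-cycles with end labels $\{1,4\}$ and $\{2,3\}$, and the $8$-cycle with end labels $\{6,9\}$. At $v_2$, attach the $8$-cycle with end labels $\{7,8\}$. Label all of these by the Case~(3) progressions $e_{2k-1}=\alpha_1+10(k-1)$, $e_{2k}=45-e_{2k-1}$. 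Then $f^+(v_1)=f^+(v_2)=55$, giving a graph in $\widetilde{\mathcal B}_2$ whose split piece has length $2$. The bound $4$ holds only when both pieces are cycles, as in $\mathcal A_2$.
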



\begin{theorem}\label{thm-2comp} A graph $G$ is in $\mathcal A_2$   if and only if $G$ is a graph in one of the following table.

\begin{table}[H]
\fontsize{10}{17}\selectfont
\begin{tabular}{l|l}
Case & $G$ ($1\le \k\le 2$ and $r\ge 2$ unless stated otherwise, $\n_\k, s_\k, t\ge 0$ if applicable)  \\\hline
(1)  & $H_\k\cong C((4l)^{[8l+5\n_\k]}, (4l+2)^{[\n_\k]})$, $l \in \{2\}\cup [4, \infty)$, $0\le \n_\k \le l$, $\n_\k = \r+5s_\k$, $\r\equiv 3l\pmod{5}$, \\
   & $\n_1+\n_2 = l = 2\r+5(s_1+s_2)$.\\\hline
(2a)  &  $C((4l-2)^{[2l]}) + C((4l-2)^{[l-1]}, 2l-2)$, $l\ge 2$. \\\hline
(2b)  &  $C(2l-2, (4l-4)^{[\frac{3l-3-\n_1}{4}]}, (4l-2)^{[\n_1]}) + C((4l-4)^{[\frac{6l-2-\n_2}{4}]}, (4l-2)^{[\n_2]})$, \\
 & $l\ge 3$, $\n_\k\le l-1$, $\n_\k = \r_\k + 4s_\k$, $l-1=\r_1+\r_2+4(s_1+s_2)$.  \\\hline
(3)  &  $C(2r, (4l-4)^{[t]}, (4l-2)^{[2r-3t]}) + C(4l-4-2r, (4l-4)^{[l-1-t]}, (4l-2)^{[3t+l-1-2r]})$, \\
   & $2r-l+1\le 3t\le 2r \le 2l-2$. \\\hline
  &  $C(2r, (4l-4)^{[t]}, (4l-2)^{[2r-3t]}) + C(4l-2-2r, (4l-4)^{[l-t]}, (4l-2)^{[l+3t-2-2r]})$, \\
   &  $2r-l+2 \le 3t \le 2r \le 2l-2$. \\\hline
 (4)  & $C(2r, (4s-6)^{[r]}) + C(4s-6-2r, (4s-6)^{[2s-3-r]},2s-4)$, $r\le s-3$. \\\hline
   & $C(2r, (4s-6)^{[r]})+ C(2s-4-2r, (4s-6)^{[s-2-r]})$, $2\le r\le (s-2)/2$. \\\hline
\end{tabular}
\caption{2-component graphs of one-point unions of cycles}\label{tab:2com}
\end{table}
\end{theorem}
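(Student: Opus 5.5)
By Theorem~\ref{thm-iff-d2}, every $G\in\widetilde{\mathcal B}_2$ with $s\ge 3$ induced paths falls under one of Cases (1), (2a), (2b), (3), (4). Moreover, in each case the end-edge labels of every induced path are determined uniquely up to isomorphism (Table~\ref{table-2-2}). So $G\in\mathcal A_2$ exactly when the set of induced paths, now read as induced cycles at two distinct centres, admits a partition $\{X_1,X_2\}$ with $S(X_1)=S(X_2)=y$. I will go through the cases in the same way as in the proof of Theorem~\ref{thm-3comp}. In each case I first compute $y$ and the end-edge label sum of each path type, which is always a multiple of a common factor such as $2l+1$, $6l-1$ or $2l-1$. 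Dividing by that factor turns the condition $S(X_\k)=y$ into a small linear Diophantine system in the numbers $\m_\k,\n_\k$ of cycles of each length in $H_\k$.

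\textbf{Unsplit cases.} For Case (1), the sums are $5(2l+1)$ for length $4l$, $2l+1$ for length $4l+2$, and $y=(8l+5)(2l+1)$. This gives $5\m_\k+\n_\k=8l+5$ with $\m_1+\m_2=3l+2$ and $\n_1+\n_2=l$. Hence $\n_\k\equiv 3l\pmod 5$, so $\n_\k=\r+5s_\k$ and $l=2\r+5(s_1+s_2)$. I will then check which residues $l$ modulo $5$ allow $2\r\le l$; this should give $l\in\{2\}\cup[4,\infty)$, with $l=1,3$ failing. In Case (2a), the single path of length $2l-2$ lies in one component. Its sum is $l(6l-1)$ and $y=2l(6l-1)$, which forces the counts $l-1$ and $2l$ exactly. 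Case (2b) is handled like Case B-2 of Theorem~\ref{thm-3comp}: the component holding the length-$(2l-2)$ cycle gets one congruence, the other component gets a second one, and I record the residues $\r_1,\r_2$ modulo $4$.

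\textbf{Split cases.} In Cases (3) and (4), one path of length $2R$ is split into two paths of lengths $2r$ and $2R-2r$, with end-edge sums $\sg_1,\sg_2$ computed exactly as in Case C of Theorem~\ref{thm-3comp}. Since $\sg_1+\sg_2>y$, the two pieces lie in different components. Fixing $H_1\ni$ the first piece, I set $t$ to be its number of $(4l-4)$-cycles, solve for the remaining counts, and impose nonnegativity. This yields the stated inequalities, such as $2r-l+1\le 3t\le 2r\le 2l-2$ for a split $(4l-4)$-path and the analogous ones for a split $(4l-2)$-path. In Case (4) there are two subcases. If a $(4s-6)$-path is split, the $(2s-4)$-path must join the second piece's component, since otherwise some count becomes negative. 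If the $(2s-4)$-path is split, the bound $r\le(s-2)/2$ comes from symmetry. For sufficiency in every case, the labels in Table~\ref{table-2-2} give the two centres the common value $y$ and all degree-2 vertices the value $x$, so every tabulated graph indeed works.

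The main obstacle is Case (4) and the split bookkeeping in general: identifying which pieces can share a component, and getting the exact ranges right, including the symmetry reduction $r\le (s-2)/2$ and showing that no configuration has been missed. I will handle this by computing every pairwise sum of the special end-edge sums and comparing each with $y$, as in the Claim of Case E.
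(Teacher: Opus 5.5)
Your proposal is correct and is essentially the paper's own proof. It uses the same case split from Theorem~\ref{thm-iff-d2}, the same reduction of $S(X_\kappa)=y$ to small linear Diophantine systems with residue bookkeeping (modulo $5$ in Case~(1), modulo $4$ in Case~(2b)), the same argument that the two pieces of a split path lie in different components because $\sigma_1+\sigma_2>y$, and the same symmetry reduction giving $2r\le 2l-2$, $r\le s-2$ and $r\le (s-2)/2$.

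The only slip is in Case~(2a). The length-$(2l-2)$ path has end-edge labels $6l-1$ and $l(6l-1)$, so its sum is $(l+1)(6l-1)$, not $l(6l-1)$; only the former gives the counts $l-1$ and $2l$.

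Also, when you write out the counts you will get $(4l)^{[(8l+5-\nu_\kappa)/5]}$ in Case~(1) and $(4s-6)^{[s-3-r]}$ in the first family of Case~(4). This is what the paper's derivation actually yields, so the table's exponents $8l+5\nu_\kappa$ and $2s-3-r$ are misprints rather than a flaw in your computation.
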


\begin{proof} From Theorem~\ref{thm-iff-d2} (or the proof of Theorem 2.1~\cite{LSNZP}), we only need to obtain all the disconnected one-point unions of cycles solutions under each of the cases as follows. In Case~(1) and Case~(2), we assume $\m_\k, \n_\k, s_\k\ge 0$ for $\k=1,2$. In all other cases, we assume $r\ge 2, t\ge 0$ unless stated otherwise.

\ms\nt {\bf Case (1).}
In this case, $s=4l+2\ge 6$, $y=16l^2+18l+5=(8l+5)(2l+1)$ and $D = [1,8l+4]$. There are $l$ induced path(s) of length $4l+2$, namely, $(i,2l+1-i)_{4l+2}$, $1\le i\le l$; and $3l+2$ induced paths of length $4l$, namely, $(2l+j,8l+5-j)_{4l}$, $1\le j\le 3l+2$. Thus, the sum of two end-edge labels of an induced path of length $4l+2$ is $2l+1$; and that of an induced path of length $4l$ is $5(2l+1)$.

\ms\nt Suppose $X_\k$ contains $\m_\k$ induced paths of length $4l$, and $\n_\k$ induced paths of length $4l+2$, where $0\le \m_\k \le 3l+2$, $0\le \n_\k \le l$. Therefore, $S(X_\k) = 5(2l+1)\m_\k + (2l+1)\n_\k = (2l+1)(5\m_\k+\n_\k)$.  Under the requirement, $5\m_\k + \n_\k = 8l+5$, $\m_1+\m_2 = 3l+2$ and $\n_1+\n_2 = l$. By solving these linear equations, we have
\[\left\{\begin{aligned}
\mu_2 & = 3l+2-\mu_1\\
\nu_1 & = 8l+5-5\mu_1\\
\nu_2 & = 5\m_1-7l-5.\\
\end{aligned}\right.\mbox{ or equivalently }
\left\{\begin{aligned}
\m_1 &= \textstyle\frac{8}{5}l - \frac{1}{5}\n_1+1\\
\m_2 &= \textstyle\frac{7}{5}l + \frac{1}{5}\n_1+1\\  
\n_2 &= l -\n_1
\end{aligned}\right.\]
Since $0\le \n_\k \le l$, $7l+5\le 5\m_\k \le 8l+5$ or equivalent to $l + 1+\frac{2l}{5} \le \m_\k\le l +1+ \frac{3l}{5}$. Thus, we can obtain all the solutions if and only if we obtain all the possible $\m_1$ (or $\n_1$) in the required range.

\ms\nt  Since $5\m_\k = 8l+5-\n_\k$, $8l+5-\n_\k \equiv 0\pmod{5}$, i.e., $\n_\k\equiv3l \pmod{5}$. Thus, for each $\n_\k \equiv 3l \pmod{5}$, a solution exists if and only if $0\le \n_\k\le l$.  By symmetry, we may assume $\n_1\ge \n_2$. Let $\r$ be the least nonnegative residue of $\n_\k \equiv 3l$ modulo 5 for $\k=1,2$, i.e., $\r\in [0,4]$. Thus, $\r=3, 1, 4, 2, 0$ when $l\equiv 1, 2, 3, 4, 0 \pmod{5}$, respectively. Therefore, \[ l = \n_1+\n_2 \ge 2\r = 6, 2, 8, 4, 0 \mbox{ when } l \equiv 1,2,3,4,0\pmod{5}.\] Thus, there is no valid $\n_\k$ when $l = 1, 3 $. For $l = 2,4$, we have $(\n_1,\n_2) = (1,1), (2,2)$, respectively. Note that each of the solutions is unique.

\ms\nt Let $\n_\k = \r + 5s_\k$, where $s_\k\ge 0, \k=1,2$. So, $l = 2\r + 5(s_1 + s_2)$. Suppose $l = 5t + \r_0\ge 5$, where $t\ge 1$ and $0\le \r_0\le 4$. Thus,
\[\begin{array}{rl|*{5}{c|}}
& \r_0= & 0 & 1 & 2 & 3 & 4 \\\hline
\Rightarrow &\r= & 0 & 3 & 1 & 4 & 2\\\hhline{=======|}
\mbox{we may choose } (s_1, s_2) &= & (t,0) & (t-1,0) & (t,0) & (t-1, 0) & (t,0)
\end{array}\]

\ms\nt Thus, there is always a solution for $(\m_\k, \n_\k)$ if and only if $l \in \{2\}\cup [4, \infty)$. If we want to find all non-isomorphic solutions, we need to solve the system completely.  In general, all the solutions are given by 
 \[ H_\k \cong C((4l)^{[8l+5\n_\k]}, (4l+2)^{[\n_\k]}), \] 
where $l \in \{2\}\cup [4, \infty)$, $\n_\k \le l$, $\n_\k = \r+5s_\k$, $\r\equiv 3l\pmod{5}$, $\n_1+\n_2 = l = 2\r+5(s_1+s_2)$. Note that both the components are isomorphic if and only if $\m_1=\m_2=(3l+2)/2$ and $\n_1=\n_2=l/2$, $l\equiv 0 \pmod{2}$, i.e., $G\cong 2C((4l)^{[(3l+2)/2]}, (4l+2)^{[l/2]})$, $l\equiv 0\pmod{2}$ and $l \ge 2$. 

\begin{example} For $l=5\equiv 0\pmod{5}$, $\r=0$ so that $s_1=1,s_2=0$ and $\n_1=5$, $\n_2=0$, $\m_1=8$, $\m_2=9$. The only solution is $C(20^{8]}, 22^{[5]})  + C(20^{[9]})$.



\ms\nt For $l=12\equiv 2\pmod{5}$, $\r=1$ so that $(s_1,s_2)=(2,0)$, $(1,1)$ and $(\n_1,\n_2) = (11,1), (6,6)$, respectively. All the non-isomorphic solutions are  $C(48^{[14]}, 50^{[11]}) + C(48^{[24]}, 50)$ and 2$(48^{[19]}, 50^{[6]})$.   \rsq
\end{example}

\ms\nt {\bf Case (2a).}
In this case, $s=3l\ge 6$, $y=2l(6l-1)$ and $D=[1,6l-1]\cup\{l(6l-1)\}$. There are $3l-1$ induced paths of length $4l-2$, namely $(i, 6l-1-i)_{4l-2}$, $1\le i\le 3l-1$; and an induced path of length $2l-2$, namely $(6l-1, l(6l-1))_{2l-2}$. (Note that in~\cite[P. 380, line 2]{LSZPN} the $l-2$ should be $2l-2$.) Thus, the sum of the two end-edge labels of an induced path of length $4l-2$ is $6l-1$, and that of an induced path of length $2l-2$ is $(l+1)(6l-1)$.

\ms\nt Suppose $X_\k$ contains $\mu_\k$ induced paths of length $4l-2$, and $\nu_\k$ induced paths of length $2l-2$, where $0\le \mu_\k\le 3l-1$ with $\m_1+\m_2=3l-1$ and $\n_1+\n_2=1$. Without loss of generality, assume  $(\n_1, \n_2) = (0,1)$. Therefore, $S(X_1)=(6l-1)\mu_1=2l(6l-1)$ and $S(X_2) = (6l-1)\m_2+(l+1)(6l-1) = (6l-1)(\m_2+l+1)$. Thus, $\m_1=2l$, $\m_2=l-1$. Therefore, the unique solution is $C((4l-2)^{[2l]}) + C((4l-2)^{[l-1]}, 2l-2)$, $l\ge 2$, and both components are not isomorphic. 

\ms\nt{\bf Case (2b)}
In this case, $s=3l-1\ge 5$, $y=(2l-1)(6l-2)$ and $D =[1,6l-3]\cup\{(2l-1)(3l-1)\}$. There are $l-1$ induced path(s) of length $4l-2$, namely, $(i, 2l-1-i)_{4l-2}$, $1\le i\le l-1$ with end-edge labels sum $2l-1$; $2l-1$ induced paths of length $4l-4$, namely, $(2l-2+j, 6l-2-j)_{4l-4}$, $1\le j\le 2l-1$ with end-edge labels sum $4(2l-1)$; and an induced path of length $2l-2$, namely, $(2(2l-1), (2l-1)(3l-1))_{2l-2}$ with end-edge labels sum $(3l+1)(2l-1)$. 

\ms\nt Without loss of generality, suppose $X_1$ contains $\m_1$ induced paths of length $4l-4$, $\n_1$ induced paths of length $4l-2$ and the only path of length $2l-2$, whereas $X_2$ contains $\m_2$ induced paths of length $4l-4$ and $\n_2$ induced paths of length $4l-2$, where $0\le \m_\k\le 2l-1$, $0\le \n_\k\le l-1$, $1\le \k \le 2$. Thus, $S(X_1) = 4(2l-1)\m_1 + (2l-1)\n_1 + (3l+1)(2l-1) = y$ and $S(X_2) = 4(2l-1)\m_2 + (2l-1)\n_2 = y$. Therefore,  $4\m_1+\n_1 + 3l+1 = 6l-2$, i.e., $4\m_1+\n_1=3l-3$ and $4\m_2 + \n_2 = 6l-2$ with $\m_1+\m_2 = 2l-1$ and $\n_1+\n_2 = l-1$. By solving these linear equations, we have
\[\left\{\begin{aligned}
\m_2 & = 2l-1-\m_1\\
\n_1 & = 3l-3-4\m_1\\
\n_2 & =4\m_1-2l+2.\\
\end{aligned}\right.\]
Since $0\le \n_\k \le l-1$, we have $2l-2 \le 4\m_1\le 3l-3$ and $5l-2 \le 4\m_2 \le 6l-3$.

\ms\nt  Since $4\m_1 = 3l-3-\n_1$ and $4\m_2=6l-2-\n_2$, $\n_1\equiv 3l-3\pmod{4}$ and $\n_2\equiv 2l-2\pmod{4}$. Thus, for each valid $\n_1\equiv 3l-3\pmod{4}$ and $\n_2\equiv 2l-2\pmod{4}$, a solution exists if and only if $0\le \n_\k\le l$. Let $\r_1$ (and $\r_2$) be the least nonnegative residue of $3l-3$ (and $2l-2$) modulo 4, i.e.,  $\r\in [0,3]$. Thus,
\[\r_1=
\begin{cases}
0 & \mbox{ if } l\equiv 1\pmod 4,\\
3 & \mbox{ if } l\equiv 2\pmod 4,\\
2 & \mbox{ if } l\equiv 3\pmod 4,\\
1 & \mbox{ if } l\equiv 0\pmod 4;
\end{cases}
\quad \r_2=
\begin{cases}
0 & \mbox{ if } l\equiv 1\pmod 4,\\
2 & \mbox{ if } l\equiv 2\pmod 4,\\
0 & \mbox{ if } l\equiv 3\pmod 4,\\
2 & \mbox{ if } l\equiv 0\pmod 4.
\end{cases}
\]
This implies that \[l = \n_1+\n_2\ge \r_1+\r_2 = \begin{cases}
0 & \mbox{ if } l\equiv 1\pmod{4},\\
5 & \mbox{ if } l\equiv 2\pmod{4},\\
2 & \mbox{ if } l\equiv 3\pmod{4},\\
3 & \mbox{ if } l\equiv 0\pmod{4}.
\end{cases}\]
Therefore, when $l = 2$, there is no valid $\n_\k$. Following are all solutions for $l \in [3,5]$.
\[\begin{array}{c|l}
l & (\n_1, \n_2)\\\hline
3 &  (2,0)\\
4 &  (1,2)\\
5 &  (4,0), (0,4)\\
\end{array}\]

\nt Now, let $\n_\k =\r_\k + 4s_\k$ for some $s_\k\ge 0$, $\k=1,2$. So, $l-1 = \r_1 + \r_2 + 4(s_1+s_2)$. Suppose $l=4t+\r\ge 6$ where $t\ge 1$ and $0\le \r\le 3$. Therefore, we may choose
\[\begin{array}{c|*{4}{c|}}
& \r=0 & \r=1 & \r=2 & \r=3  \\\hline
(s_1, s_2)= & (t,0) & (t,0) & (t-1,0) & (t, 0)
\end{array}\]
Therefore, there is always a solution for $l\ge 3$, and both the components are not isomorphic.   In general, all the solutions are  \[C(2l-2, (4l-4)^{[\frac{3l-3-\n_1}{4}]}, (4l-2)^{[\n_1]}) + C((4l-4)^{[\frac{6l-2-\n_2}{4}]}, (4l-2)^{[\n_2]}),  \] where $l\ge 3$, $\n_\k\le l-1$, $\n_\k = \r_\k + 4s_\k$, $l-1=\r_1+\r_2+4(s_1+s_2)$. 

\begin{example}
For $l=9\equiv 1\pmod{4}$, $\r=0$ so that  $\r_1=\r_2=0$ with corresponding $(\m_1,\m_2)=(5,12), (4, 13), (6,11)$ and $(\n_1,\n_2)=(4,4), (8,0), (0,8)$. Thus, the three solutions are $C(16,32^{[5]},34^{[4]}) + C(32^{[12]},34^{[4]})$, $C(16,32^{[4]},34^{[8]}) + C(32^{[13]})$, and $C(16,32^{[6]}) + C(32^{[11]},34^{[8]})$ respectively.
\end{example}

\ms\nt {\bf Case (3).}
In this case, $s=2l\ge 4$, $y=8l^2-6l+1 = (2l-1)(4l-1)$ and $D=[1,4l-2]\cup\{\g_1,\g_2 \mid \g_1+\g_2=y \mbox{ and } \g_1,\g_2 \mbox{ not adjacent to any end-edge labels}\}$. There are $l-1$ induced paths of length $4l-2$, namely $(i, 2l-1-i)_{4l-2}$, $1\le i\le l-1$, with end-edge labels sum $2l-1$; and $l$ induced paths of length $4l-4$, namely $(2l-2+j, 4l-1-j)$, $1\le j\le l$, with end-edge labels sum $3(2l-1)$. Moreover, one of the paths must be split into two paths of even length at least 4. Thus, $l\ge 3$.

\ms\nt Consider the induced paths of length $4l-4$ with consecutive edge labels $A_{2l-2}(2l-2+j; 4l-2 )\diamond A_{2l-2}(8l^2-12l+5-j; -4l+2)$, $1\le j\le l-1$.   Suppose this path is split into two paths of length $2r$ and $4l-4-2r$ for $2\le r\le 2l-4$, namely, $(2l-2+j, 8l^2-8l+3-j-r(4l-2))_{2r}$ with end-edge labels sum, say $\a_1 = (4l-1-2r)(2l-1)$ and $(2l-2+j+r(4l-2), 4l-1-j)_{4l-4-2r}$ with end-edge labels sum, say, $\a_2 =  (3+2r)(2l-1)$. Note that $\{(4l-1-2r, 3+2r) \mid 2\le r\le l-1\} = \{(3+2r, 4l-1-2r) \mid l-1 \le r\le 2l-4\}$. By symmetry, we need only consider $2\le r\le l-1$. Without loss of generality, we may just take $j=1$ to get the paths $(2l-1, (4l-2-2r)(2l-1))_{2r}$ and $((1+2r)(2l-1),2(2l-1))_{4l-4-2r}$.  Since the induced paths of length $2r$ and $4l-4-2r$ must not belong to the same component of each solution, all the solutions are \[C(2r, (4l-4)^{[t]}, (4l-2)^{[2r-3t]}) + C(4l-4-2r, (4l-4)^{[l-1-t]}, (4l-2)^{[3t+l-1-2r]})\] where $2r-l+1\le 3t\le 2r \le 2l-2$, and both the components are isomorphic if and only if $r=2t=l-1\ge 2$, i.e., $G \cong 2C(2l-2, (4l-4)^{[(l-1)/2]}, (4l-2)^{[(l-1)/2]})$, $l\equiv 1\pmod{2}$ and $l\ge 3$.

\begin{example}  For $l=3$, we get $r=2$, $t=1$. The only solution is $C(4, 8, 10) + C(4, 8, 10)$. For $l=4$, we get $2r-3\le 3t\le 2r \le 6$ so that $r=2,3$ and $(r,t)=(2,1), (3,1), (3,2)$. The solutions are $C(4,12,14) + C(8,12^{[2]},14^{[2]})$ and $C(6,12,14^{[3]})+ C(6,12^{[2]})$  since $(r,t)=(3,1)$ and $(3,2)$ give isomorphic solution. \rsq
\end{example}

\ms\nt Consider the induced paths of length $4l-2$ with consecutive edge labels $A_{2l-1}(i;4l-2)\diamond A_{2l-1}(8l^2-10l+3-i;-4l+2)$, $1\le i\le l$. By the same argument as above, we split the induced path $A_{2l-1}(1,4l-2)\diamond A_{2l-1}(8l^2-10l+2; -4l+2)$ to get $(1, 8l^2-10l+2 - (r-1)(4l-2))_{2r}$ with end-edge labels sum $\a_1 = (4l-1-2r)(2l-1)$, and $(1+r(4l-2), 2l-2)_{4l-2-2r}$ with end-edge labels sum $\a_2 = (1+2r)(2l-1)$ for $2\le r\le l-1$. Since the induced paths $(1, 8l^2-10l+2 - (r-1)(4l-2))_{2r}$ and $(1+r(4l-2), 2l-2)_{4l-2-2r}$ must not belong to the same component of each solution, all the solutions are \[C(2r, (4l-4)^{[t]}, (4l-2)^{[2r-3t]}) + C(4l-2-2r, (4l-4)^{[l-t]}, (4l-2)^{[l+3t-2-2r]})\] where $2r-l+2 \le 3t \le 2r \le 2l-2$, and both components are not isomorphic for all $r,l$.   \rsq 

\begin{example}
\ms\nt For $l=3$, we get $r=2$, $t=1$. the only solution is $C(4,8,10) + C(6,8^{[2]})$.
\end{example}

\ms\nt {\bf Case (4).}
In this case, $s\ge 4$, $y=4s^2-10s+6 = (2s-2)(2s-3)$ and $D=[1,2s-3]\cup\{y/2, \g_1,\g_2 \mid  \g_1+\g_2=y \mbox{ and } \g_1,\g_2 \mbox{ not adjacent to any end-edge labels}\}$. There are $s-2$ induced paths of length $4s-6$, namely $(i,2s-3-i)_{4s-6}$, $1 \le i\le s-2$, with end-edge labels sum $2s-3$; and an induced path of length $2s-4$, namely $(2s-3, (s-1)(2s-3))_{2s-4}$ with end-edge labels sum $s(2s-3)$.

\ms\nt Consider the induced paths of length $4s-6$. By the same argument as in Case (3), we split the induced path $A_{2s-3}(1;2s-3) \diamond A_{2s-3}(4s^2-12s+8; -2s+3)$ to get $(1, 4s^2-10s+5-r(2s-3))_{2r}$ with end-edge labels sum $\a_1 = (2s-2-r)(2s-3)$, and $(1+r(2s-3), 2s-4)_{4s-6-2r}$ with end-edge labels sum $\a_2 = (1+r)(2s-3)$ for $2\le r\le s-2$. Since the induced paths $(1, 4s^2-10s+5-r(2s-3))_{2r}$ and  $(1+r(2s-3), 2s-4)_{4s-6-2r}$ must not belong to the same component of each solution, all the solutions are \[C(2r, (4s-6)^{[r]}) + C(4s-6-2r, (4s-6)^{[2s-3-r]},2s-4)\] where $r\le s-3$. Note that $s\ge 5$ and both the components are not isomorphic for all $r,s$. Observe that  \[C(2r, (4s-6)^{[r-s]}, 2s-4) + C(4s-6-2r, (4s-6)^{[2s-r-3]})\] where $s\le r\le 2s-3$ is not a solution since it contradicts $r\le s-2$. 

\begin{example}  For $s=5$, we get $r=2$. The only solution is $C(4, 14^{[2]}) + C(10, 6)$.  \rsq
\end{example}


\ms\nt Consider the induced paths of length $2s-4$, $s\ge 6$. Suppose we split it to get $(2s-3, (2s-3-r)(2s-3))_{2r}$ with end-edge labels sum $\a_1 = (2s-2-r)(2s-3)$, and $((r+1)(2s-3), (s-1)(2s-3))_{2s-4-2r}$ with end-edge labels sum $\a_2 =(s+r)(2s-3)$ for $2\le r\le (s-2)/2$. Since the induced paths $(2s-3, (2s-3-r)(2s-3))_{2r}$ and $((r+1)(2s-3), (s-1)(2s-3))_{2s-4-2r}$ must not belong to the same component of each solution, all the solutions are \[C(2r, (4s-6)^{[r]})+ C(2s-4-2r, (4s-6)^{[s-2-r]})\] where $2\le r\le (s-2)/2$. Note that $s \ge 6$ and both the components are isomorphic if and only if $r = (s-2)/2$, i.e., $G\cong 2C(s-2, (4s-6)^{[(s-2)/2]})$, $s\equiv 0\pmod{2}$.

\begin{example} For $s=6$, we get $r=2$. The only solution is $2C(4, 18^{[2]})$.  \rsq
\end{example}  \end{proof}

\begin{corollary}\label{cor-2-isocomp} For a one-point union of cycles $G$,  $\chi_{la}(2G)=2$  if and only if
\begin{enumerate}[(i)]
\item $G\cong C((4l)^{[(3l+2)/2]}, (4l+2)^{[l/2]})$, $l\equiv 0\pmod{2}$ and $l\ge 2$,
\item $G\cong C(2l-2, (4l-4)^{[(l-1)/2]}, (4l-2)^{[(l-1)/2]})$, $l\equiv 1\pmod{2}$ and $l\ge 3$,
\item $G\cong C(l-2, (4l-6)^{[(l-2)/2]})$, $l\equiv 0\pmod{2}$ and $l \ge 6$. 
\end{enumerate}
\end{corollary}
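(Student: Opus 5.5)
The corollary is a direct specialisation of Theorem~\ref{thm-2comp}. I would read off, case by case, exactly when the two components $H_1,H_2$ in Table~\ref{tab:2com} can be isomorphic. By definition $\chi_{la}(2G)=2$ for a one-point union of cycles $G$ if and only if $2G\in\mathcal A_2$. Indeed, $2G$ has all vertices of degree 2 except the two centres, every induced cycle is even by the structure in Theorem~\ref{thm-iff-d2}, and $2G$ is not of the form $B(n^{[2]})$ since it is disconnected. So $2G\cong H_1+H_2$ must appear in Table~\ref{tab:2com} with $H_1\cong H_2\cong G$. Conversely, every such row gives $\chi_{la}(2G)=2$.

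I will go through the cases in order, comparing the multisets of cycle lengths in the two components, since two one-point unions of cycles are isomorphic exactly when these multisets agree.

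\begin{enumerate}[(1)]
\item In Case~(1) isomorphism forces $\m_1=\m_2=(3l+2)/2$ and $\n_1=\n_2=l/2$, so $l$ is even. I must also check that the congruence $\n_\k\equiv 3l\pmod 5$ holds automatically. It does, because $\n_\k=l/2$ satisfies $5\m_\k=8l+5-l/2$, and this is an integer identity once $\m_\k=(3l+2)/2$. This gives (i) for every even $l\ge 2$.
\item In Cases~(2a) and (2b) exactly one component contains the unique cycle of length $2l-2$, and that length differs from $4l-2$ and $4l-4$ when $l\ge 3$. So no isomorphic pair occurs. For $l=2$ in (2a) the components $C(6^{[4]})$ and $C(6,2)$ are not isomorphic either, and a length-2 cycle is inadmissible anyway.
\item In Case~(3), first subfamily, a cycle of length $2r$ lies only in the first component and one of length $4l-4-2r$ only in the second. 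Isomorphism forces $2r=4l-4-2r$, i.e.\ $r=l-1$. Matching the counts of $(4l-4)$- and $(4l-2)$-cycles then gives $t=l-1-t$ and $2r-3t=3t+l-1-2r$, hence $t=(l-1)/2$ with $l$ odd. This is (ii). In the second subfamily isomorphism would need $2r=4l-2-2r$ and $t=l-t$. Combined with $2r-3t=l+3t-2-2r$ this is inconsistent, as already noted in the proof.
\item In Case~(4), first subfamily, the cycle of length $2s-4$ appears in only one component. Unless $2s-4$ coincides with another length there, the components differ. The equalities $2s-4=2r$ or $2s-4=4s-6$ are excluded by $r\le s-3$ and $s\ge 5$. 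In the second subfamily isomorphism forces $r=(s-2)/2$ with $s$ even and $s\ge 6$. Renaming $s$ as $l$ gives (iii).
\end{enumerate}

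The only delicate step is the first subfamily of Case~(4), together with a check that the count equations in Case~(1) yield integer solutions satisfying the constraints. The remaining cases are immediate comparisons of the cycle-length multisets.
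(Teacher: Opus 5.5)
Your proposal is correct and takes the paper's own route: the corollary is the collection of the ``both components are isomorphic iff \dots'' remarks made case by case in the proof of Theorem~\ref{thm-2comp}, and you re-verify these row by row in Table~\ref{tab:2com}. Two small fixes. First, the evenness of the cycles of $2G$ should come from bipartiteness (a local antimagic $2$-coloring is a proper $2$-coloring), not from Theorem~\ref{thm-iff-d2}, which presupposes membership in $\widetilde{\mathcal B}_2$. Second, in the second subfamily of Case~(3) the obstruction is parity alone, since $2r=4l-2-2r$ forces $4r=4l-2$; the three count equations you list are otherwise compatible over $\mathbb{Q}$.
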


\section{Conclusion and Open Problems}
The complete characterizations established in this paper demonstrate the efficacy of mapping structural graph labeling constraints directly onto bounded algebraic condition sets. Rather than treating disconnected topologies as isolated combinatorial puzzles, this unified partition framework transforms complex boundary conditions into systemic invariant relationships. This methodology provides a repeatable blueprint for tackling classification bottlenecks in graph labeling invariants across general families of disjoint graph unions.


\ms\nt In~\cite[Theorem 2.4]{LSN-IJMSI}, \cite{Lau+Shiu+chila2d=3} and this paper, we prove that using a case-by-case algebraic analysis, one can obtain all non-isomorphic families of disconnected one-point unions of cycles  in $A_d, d\ge 1$.  The cases where $d\in\{1,2,3\}$ are completely solved. Theorems~\ref{thm-iff-d3} and~\ref{thm-iff-d2} give partial solutions to Problem~\ref{prob-Bd}.

\ms\nt  For $d=2,3$, if $G$ is a graph in $\widetilde{\mathcal B}_d$ not satisfying one of the conditions in Theorems~\ref{thm-iff-d3} and~\ref{thm-iff-d2} respectively, then $\chi_{la}(G)\ge 3$. Thus, for $d=2,3$, Problem~\ref{prob-Bd} is completely solved if the following problem is settled.

\begin{problem} Completely characterize all graphs $G$ in $\widetilde{\mathcal B}_d\setminus \mathcal A_d$ for $d=2,3$.  \end{problem}

\nt The following problem arises naturally.

\begin{problem} For $d\ge 4$, characterize all graphs in $\mathcal A_d$ without a case-by-case analysis. \end{problem}

\nt In~\cite[Theorem 8]{LNS-GC}, the authors proved the existence of a connected graph $G$ of order $n$ with pendant vertices such that $\chi_{la}(G) = n-k$ if and only if  $n\ge k+3\ge 3$.  Since $\chi_{la}(3C(2r, (12r+2)^{[r]})) = \chi(3C(2r, (12r+2)^{[r]})) = 2$ for $r\ge 2$, the following conjecture arises naturally.

\begin{conjecture}\label{conj-discon}
For connected graphs $G$ of order $n\ge 3$ without pendant vertices, there exist graphs $mG$ such that $\chi_{la}(mG) = mn-k$ for each $m\ge 2$ and $0\le k\le mn-2$. Moreover, characterize all such graphs $G$.
\end{conjecture}

\nt By  Corollaries~\ref{cor-3-isocomp} and~\ref{cor-2-isocomp}, we know Conjecture~\ref{conj-discon} holds for $m=2,3$ and $k=mn-2$. 

\ms\nt Note that $\mathcal A_1$ has 2, $\mathcal A_2$ has 7,  while $\mathcal A_3$ has 28 distinct families of non-isomorphic solutions. Moreover, there are 3 and 4 distinct families with all isomorphic components in $A_2$ and $A_3$, respectively. We conclude with the following conjecture.

\begin{conjecture}  As $d\to \infty$, $\mathcal A_d$ has infinitely many distinct families of solutions (with or without all isomorphic components).  \end{conjecture}

\ms\nt{\bf\fontsize{15pt}{30pt}\selectfont Acknowledgements} 

\ms\nt The first author dedicates this paper to Profs. J.D.H. Smith and  S.Y. Song of Iowa State University for introducing him to the beauty of abstract algebra, graph theory, and combinatorics.

\end{document}